\definecolor{ForestGreen}{RGB}{12, 110, 46}
\definecolor{ForestGreenTwo}{RGB}{120, 110, 86}
\pgfplotsset{compat=1.15}
\newtheorem{theorem}{Theorem}[section]
\newtheorem{proposition}[theorem]{Proposition}
\newtheorem{lemma}[theorem]{Lemma}
\newtheorem{corollary}[theorem]{Corollary}
\newtheorem{definition}[theorem]{Definition}
\newtheorem{problem}[theorem]{Problem}
\newtheorem{conjecture}[theorem]{Conjecture}
\newtheorem{question}[theorem]{Question}
\newtheorem{remark}[theorem]{Remark}
\newtheorem{example}[theorem]{Example}
\newtheorem{notation}[theorem]{Notation}
\newcommand{\G}{\Gamma}
\newcommand{\Aut}{\mathrm{Aut}}
\newcommand{\Qed}{\rule{2.5mm}{3mm}}
\newenvironment{proof}{{\noindent \sc Proof.}}{\hfill $\Qed$}
\def\ZZ{{\hbox{\sf Z\kern-.43emZ}}}
\title{On some problems regarding distance-balanced graphs}
\author{
{Blas Fern\'andez}\\
{\small Andrej Maru\v si\v c Institute}\\
{\small University of Primorska}\\
{\small Muzejski trg 2, 6000 Koper, Slovenia }\\
{\small blas.fernandez@famnit.upr.si} \and
{Ademir Hujdurovi\'c}\\
{\small Faculty of Mathematics, Natural Sciences and Information Technologies  }\\
{\small University of Primorska}\\
{\small Glagolja\v ska 8, 6000 Koper, Slovenia }\\
{\small ademir.hujdurovic@upr.si}
}
\begin{document}

	\definecolor{uuuuuu}{rgb}{0.26666666666666666,0.26666666666666666,0.26666666666666666}
	\definecolor{ffffff}{rgb}{1,1,1}
	\definecolor{uuuuuu}{rgb}{0.2,0,0}
	\definecolor{uuuuuu}{rgb}{0.26666666666666666,0.26666666666666666,0.26666666666666666}
	\definecolor{uuuuuu}{rgb}{0.2,0,0}
	\definecolor{cqcqcq}{rgb}{0.7529411764705882,0.7529411764705882,0.7529411764705882}
{\small
\maketitle
}

\begin{abstract}
A graph $\Gamma$ is said to be {\em distance-balanced} if for any edge $uv$ of $\Gamma$, the number of vertices closer to $u$ than to $v$ is equal to the number of vertices closer to $v$ than to $u$, and it is called {\em nicely distance-balanced} if in addition this number is independent of the chosen edge $uv$. A graph $\Gamma$ is said to be {\em strongly distance-balanced} if for any edge $uv$ of $\Gamma$ and any integer $k$, the number of vertices at distance $k$ from $u$ and at distance $k+1$ from $v$ is equal to the number of vertices at distance $k+1$ from $u$ and at distance $k$ from $v$. 

In this paper we {solve} an open problem posed by Kutnar and Miklavi\v c \cite{KM} by constructing several infinite families of nonbipartite nicely distance-balanced graphs which are not strongly distance-balanced. We disprove a conjecture regarding characterization of strongly distance-balanced graphs posed by Balakrishnan  {et al.}\  \cite{BCPSSS} by providing infinitely many counterexamples, and  {answer a question} posed by Kutnar  {et al.}\ in \cite{{KMMM05}} regarding {the} existence of semisymmetric distance-balanced graphs which are not strongly distance-balanced by providing an infinite family of such examples. { We also show that for a graph $\G$ with $n$ vertices and $m$ edges it can be checked in $O(mn)$ time if $\G$ is strongly-distance balanced and if $\G$ is nicely distance-balanced.}
\end{abstract}

\noindent{\em Mathematics Subject Classifications: 
05C12;
05C75.}

\noindent{\em Keywords: distance-balanced graph; nicely distance-balanced graph, strongly  distance-balanced graph.} 


\section{Introduction}
\label{sec:intro}

\noindent
Let $\G$ be a finite, undirected, connected graph and let  $V(\G)$ and $E(\G)$ denote the vertex set and the edge set of $\G$, respectively.
For $u,v \in V(\G)$, let $d(u,v) = d_{\G}(u,v)$ denote the minimal path-length distance between $u$ and $v$. 
For a pair of adjacent vertices $u,v$ of $\G$ we denote 
$$
W_{u,v} = \{x\in V(\G)\mid d(x,u)<d(x,v)\}.
$$
We say that $\G$ is {\em distance--balanced} (DB for short) whenever
for an arbitrary pair of adjacent vertices $u$ and $v$ of $\G$ we have that
$$
|W_{u,v}| = |W_{v,u}|.
$$

The investigation of distance-balanced graphs was initiated in 1999 by Handa \cite{Ha}, who considered distance-balanced partial cubes. The term itself was introduced by Jerebic, Klav\v zar and Rall in \cite{JKR}, who gave some basic properties and characterized { Cartesian} and lexicographic products of distance-balanced graphs. The family of distance-balanced graphs is very rich and its study is interesting from various purely graph-theoretic aspects where one focuses on particular properties of such graphs such as symmetry~\cite{KMMM05, KMMM09,YHLZ}, connectivity~\cite{Ha, MS} or complexity aspects of algorithms related to such graphs~\cite{CL}. However, the balancedness property of these graphs makes them very appealing also in areas such as mathematical chemistry and communication networks. For instance, the investigation of such graphs is highly related to the well-studied Wiener index and Szeged index (see ~\cite{BBCKVZ14, IKM, JKR, TRA}) and they present very desirable models in various real-life situations related to (communication) networks~\cite{BBCKVZ14}. 
Recently, the relations between distance-balanced graphs and traveling salesman problem were studied in \cite{CD}. It turns out that these graphs can be characterized by properties that at first glance do not seem to have much in common with the original definition from~\cite{JKR}.
 For example, in~\cite{BCPSSS} it was shown that the distance-balanced graphs coincide with the {\em self-median} graphs, that is graphs for which the sum of the distances from a given vertex to all other vertices is independent of the chosen vertex. Other such examples are  {\em equal opportunity graphs} (see~\cite{BBCKVZ14} for the definition). In~\cite{BBCKVZ14} it is shown that { distance-balanced graphs of even order} are also equal to opportunity graphs. Finally, let us also mention that various generalisations of the distance-balanced property were defined and studied in the literature, see for example \cite{AMHK, FrM, HA, JKRus, MS1}.

\medskip
The notion of nicely distance-balanced graphs appears quite naturally in the context of DB graphs. We say that $\G$ is {\em nicely distance--balanced} (NDB for short) whenever there exists a positive integer $\gamma=\gamma(\G)$,
such that for an arbitrary pair of adjacent vertices $u$ and $v$ of $\G$
$$
|W_{u,v}| = |W_{v,u}| = \gamma
$$
holds. Clearly, every NDB graph is also DB, but the opposite is not necessarily true. For example, if $n \ge 3$ is an odd positive integer, then the prism graph on $2n$ vertices is DB, but not NDB. 

Assume now that $\G$ is NDB. Let us denote the diameter of $\G$ by $d$ ({\em diameter} of a graph is the maximum distance between two vertices).  In \cite{KM}, where these graphs were first defined, it was proved that $d \le \gamma$ and NDB graphs with $d=\gamma$ were classified. It turns out that $\G$ is NDB with $d=\gamma$  if and only if $\G$ is either isomorphic to a complete graph on $n \ge 2$ vertices,  or to a cycle on $2d$ or $2d+1$ vertices.  In \cite{FMP} regular NDB graphs for which $\gamma=d+1$ were studied. The situation in this case is much more complex than in the case $\gamma=d$. It was shown that the only regular NDB {graphs} with valency $k$, diameter $d$ and $\gamma = d+1$ are the Petersen graph (with $k=3$ and $d=2$), the complement of the Petersen graph (with $k=6$ and $d=2$), the complete multipartite graph $K_{t \times 3}$ with $t$ parts of cardinality $3$, $t \ge 2$ (with $k=3(t-1)$ and $d=2$), the M\"obius ladder graph on {8} vertices (with $k=3$ and $d=2$);  the Paley graph on 9 vertices (with $k=4$ and $d=2$),  the $3$-dimensional hypercube $Q_3$ (with $k=3$ and $d=3$), the line graph of the $3$-dimensional hypercube $Q_3$ (with $k=4$ and $d=3$), and the icosahedron (with $k=5$ and $d=3$). 

Another concept closely related to the concept of distance-balanced graphs is the one of strongly distance-balanced graphs. For an arbitrary edge $uv$ of a given graph $\G$, and any two nonnegative integers $i,j$ we let
$$D^i_j(u, v)=\{x\in V(\G)\mid d(u, x)=i \textrm{ and } d(v, x)=j\}.$$
A graph $\G$ is called {\it strongly distance-balanced} {(SDB for short)} if $|D^i_{i-1}(u, v)|=|D^{i-1}_i(u, v)|$ holds for every $i\geq 1$ and every edge $uv$ in $\G$. It is easy to see that a strongly distance-balanced graph is also distance-balanced, but the converse is not true in general (see \cite{KMMM05}).  
For more results on this and related concepts see \cite{BCPSSS,CL,IKM,KM,MS}.

In this paper we {solve an open} problem posed by Kutnar and Miklavi\v c \cite{KM} regarding {the} existence of nonbipartite NDB graphs which are not SDB. We construct several infinite families of such graphs, see Proposition \ref{ex:reg} and Corollary~\ref{regular} for {a} construction of regular examples, and Proposition~\ref{mainprop} for {a} construction of non-regular examples. 
In section~\ref{sec:Counterexample to conjecture Sparl} we provide an infinite family of counterexamples to a conjecture {regarding the characterization} of SDB graphs posed by Balakrishnan  {et al.}\  \cite{BCPSSS}. In section~\ref{sec:semisymmetric} we {answer a} question posed by Kutnar {et al.}\ in \cite{{KMMM05}} regarding {the} existence of semisymmetric distance-balanced graphs which are not strongly distance-balanced and provide an infinite family of such examples. In section~\ref{sec:recognition of SDB and NDB} we show that for a graph $\G$ with $n$ vertices and $m$ edges it can be checked in $O(mn)$ time if $\G$ is strongly-distance balanced and { if $\G$ is nicely distance-balanced.}


\section{Preliminaries}
\label{sec:prelim}

\noindent
In this section we recall some preliminary results that we will find useful later in the paper. Let $\G$ denote a simple, finite, connected graph with vertex set $V(\G)$, edge set $E(\G)$. If $u,v \in V(\G)$ are adjacent then we simply write $u \sim v$ and we denote the corresponding edge by $uv=vu$. For $u\in V(\G)$ and an integer $i$ we let $S_{i}(u)$ denote the set of vertices of $V(\G)$ that are at distance $i$ from $u$. We abbreviate $S(u)=S_1(u)$. We set $\epsilon(u)=\max\{d(u,z) \mid z \in V(\G)\}$ and we call $\epsilon(u)$ the {\em eccentricity} of  $u$. Let  $d = \max \{\epsilon(u) \mid u \in V(\G)\}$ denote the {\em diameter} of $\G$. Pick adjacent vertices $u,v$ of $\G$. For any two non-negative integers $i,j$ we let 
$$
D^i_j(u,v)=S_i(u)\cap S_j(v).
$$
By the triangle inequality we observe only the sets $D^{i-1}_i(u,v)$, $D^{i}_i(u,v)$ and $D^{i}_{i-1}(u,v)$ ($1\le i \le d$) can be nonempty (see also Figure~\ref{01}).

{\small\begin{figure}[!ht]{\rm
\begin{center}
\begin{tikzpicture}[scale=.565]

\draw [line width=1pt, draw=ForestGreen] (-2.,-3.)-- (22.,-3.);
\draw [line width=1pt, draw=ForestGreen] (-2.,3.)-- (22.,3.);
\draw [line width=1pt, draw=ForestGreen] (1.,0.)-- (25,0);
\draw [line width=1pt, draw=ForestGreen] (-2,-3)-- (4,3);
\draw [line width=1pt, draw=ForestGreen] (4,-3)-- (10,3);
\draw [line width=1pt, draw=ForestGreen] (10,-3)-- (16,3);
\draw [line width=1pt, draw=ForestGreen] (16,-3)-- (22,3);
\draw [line width=1pt, draw=ForestGreen] (22,-3)-- (25,0);
\draw [line width=1pt, draw=ForestGreen] (-2.,3.)-- (4,-3);
\draw [line width=1pt, draw=ForestGreen] (4,3.)-- (10,-3);
\draw [line width=1pt, draw=ForestGreen] (10,3.)-- (16,-3);
\draw [line width=1pt, draw=ForestGreen] (16,3.)-- (22,-3);
\draw [line width=1pt, draw=ForestGreen] (22,3.)-- (25,0);
\draw [line width=1pt, draw=ForestGreen] (-2,-3)-- (-2,3);
\draw [line width=1pt, draw=ForestGreen] (4,-3)-- (4,3);
\draw [line width=1pt, draw=ForestGreen] (10,-3)-- (10,3);
\draw [line width=1pt, draw=ForestGreen] (16,-3)-- (16,3);
\draw [line width=1pt, draw=ForestGreen] (22,-3)-- (22,3);

\fill (-2.,-3.) circle [radius=0.23];
\fill (-2.,3.) circle [radius=0.23];
\node at (-2.5,3.1) {\normalsize $u$};
\node at (-2.5,-3.1) {\normalsize $v$};

\draw[fill=white, draw=white, line width=0.6pt] (4.,3.) ellipse (1.5cm and .9cm);
\draw[fill=white, draw=white, line width=0.6pt] (7.,0.) ellipse (1.5cm and .9cm);
\draw[fill=white, draw=black, line width=0.6pt] (1.,0.) ellipse (1.5cm and .9cm);
\draw[fill=white, draw=white, line width=0.6pt] (4.,-3.) ellipse (1.5cm and .9cm);
\draw[fill=white, draw=white, line width=0.6pt] (19.,0.) ellipse (1.5cm and .9cm);
\draw[fill=white, draw=black, line width=0.6pt] (22.,3.) ellipse (1.5cm and .9cm);
\draw[fill=white, draw=black, line width=0.6pt] (22.,-3.) ellipse (1.5cm and .9cm);
\draw[fill=white, draw=black, line width=0.6pt] (10.,3.) ellipse (1.5cm and .9cm);
\draw[fill=white, draw=black, line width=0.6pt] (10.,-3.) ellipse (1.5cm and .9cm);
\draw[fill=white, draw=black, line width=0.6pt] (13.,0.) ellipse (1.5cm and .9cm);
\draw[fill=white, draw=white, line width=0.6pt] (16.,3.) ellipse (1.5cm and .9cm);
\draw[fill=white, draw=white, line width=0.6pt] (16.,-3.) ellipse (1.5cm and .9cm);
\draw[fill=white, draw=black, line width=0.6pt] (25.,0.) ellipse (1.5cm and .9cm);

\node at (1,0) {\normalsize $D^{1}_{1}$};
\node at (7,0) {\normalsize $\cdots$};
\node at (13,0) {\normalsize $D^{i}_{i}$};
\node at (19,0) {\normalsize  $\cdots$};
\node at (25,0) {\normalsize $D^{d}_{d}$};

\node at (4,3) {\normalsize $\cdots$};
\node at (10,3) {\normalsize $D^{i-1}_{i}$};
\node at (16,3) {\normalsize  $\cdots$};
\node at (22,3) {\normalsize $D^{d-1}_{d}$};

\node at (4,-3) {\normalsize $\cdots$};
\node at (10,-3) {\normalsize $D^{i}_{i-1}$};
\node at (16,-3) {\normalsize  $\cdots$};
\node at (22,-3) {\normalsize $D^{d}_{d-1}$};
\end{tikzpicture}
\caption{\rm 
Graphical representation of the sets $D^i_j(u,v)$. The line between $D_j^i$ and $D_{m}^{\ell}$ indicates possible edges between vertices of $D^i_j$ and $D_{m}^{\ell}$.
}
\label{01}
\end{center}
}\end{figure}
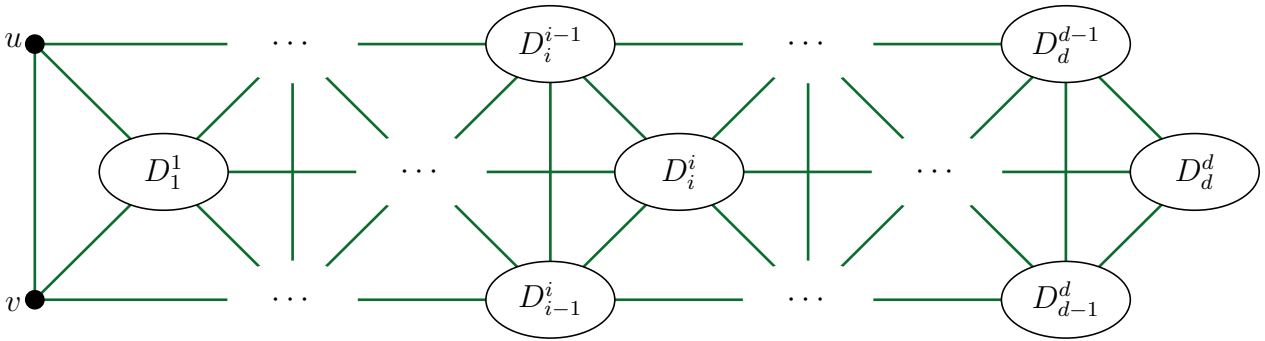}

Let us recall the definition of {NDB graphs}. For an edge $uv$ of $\G$ we denote 
$$
W_{u,v} = \{x\in V(\G)\mid d(x,u)<d(x,v)\}.
$$
We say that $\G$ is {\em nicely distance--balanced} (NDB for short) whenever there exists a positive integer $\gamma=\gamma(\G)$, such that for any edge $uv$ of $\G$
$$
|W_{u,v}| = |W_{v,u}| = \gamma
$$
holds. One can easily see that $\G$ is NDB if and only if for every edge $uv\in E(\G)$ we have

\begin{equation}
	\label{er}
	\sum_{i=1}^{d}|D^{i}_{i-1}(u,v)|=\sum_{i=1}^{d}|D^{i-1}_i(u,v)|=\gamma. \nonumber 
\end{equation}
Pick adjacent vertices $u,v$ of $\G$. For the purposes of this paper we say that the edge $uv$ is {\em balanced}, if $|W_{u,v}| = |W_{v,u}|$ holds.

Another concept closely related to the concept of distance-balanced graphs is the one of strongly distance-balanced graphs. A graph $\G$ is called {\it strongly distance-balanced} {(SDB for short)} if $|D^i_{i-1}(u, v)|=|D^{i-1}_i(u, v)|$ holds for every $i\geq 1$ and every edge $uv$ in $\G$. { Please note SDB graphs are also called {\it distance-degree regular} and were first studied in \cite{HN84}.} It is easy to see that a strongly distance-balanced graph is also distance-balanced, but the converse is not true in general (see \cite{KMMM05}).

Kutnar  {et al.}\ gave the following characterization of strongly distance-balanced graphs.
\begin{proposition}\cite[Proposition 2.1]{KMMM05}\label{prop:SDB characterization}
	Let $\G$ be a graph with diameter $d$. Then $\G$ is strongly distance-balanced if and only if $|S_i(u)|=|S_i(v)|$ holds for every edge $uv\in E(\G)$ and every $i\in \{0,\ldots,d\}$.
\end{proposition}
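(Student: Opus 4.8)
The plan is to compare, for a fixed edge $uv$, the natural partitions of the spheres $S_i(u)$ and $S_i(v)$ into the sets $D^{i}_{j}(u,v)$, and then to exploit a telescoping cancellation. First I would record the elementary partition: since $u\sim v$, the triangle inequality gives $|d(u,x)-d(v,x)|\le 1$ for every vertex $x$, so for each $i\ge 0$ we have the disjoint decompositions
$S_i(u)=D^{i}_{i-1}(u,v)\cup D^{i}_{i}(u,v)\cup D^{i}_{i+1}(u,v)$ and $S_i(v)=D^{i-1}_{i}(u,v)\cup D^{i}_{i}(u,v)\cup D^{i+1}_{i}(u,v)$, with the convention that $D^{r}_{s}(u,v)=\emptyset$ whenever $r<0$, $s<0$, or $\max\{r,s\}>d$. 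Writing $a_i=|D^{i}_{i-1}(u,v)|$ and $b_i=|D^{i-1}_{i}(u,v)|$, and observing that $|D^{i}_{i+1}(u,v)|=b_{i+1}$ and $|D^{i+1}_{i}(u,v)|=a_{i+1}$, subtraction of the two counts gives, for every $i\ge 0$, the identity $|S_i(u)|-|S_i(v)|=(a_i-b_i)-(a_{i+1}-b_{i+1})$. Setting $c_i=a_i-b_i$ this reads $|S_i(u)|-|S_i(v)|=c_i-c_{i+1}$, and $uv$ satisfies the strongly-distance-balanced condition precisely when $c_i=0$ for all $i\ge 1$.

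The forward implication $(\Rightarrow)$ is then immediate: if $\G$ is SDB then every $c_i=0$, hence $|S_i(u)|=|S_i(v)|$ for every edge $uv$ and every $i\in\{0,\dots,d\}$. For the converse $(\Leftarrow)$ I would fix an edge $uv$ and prove $c_i=0$ for all $i\ge 1$ by induction on $i$. The base case $c_1=0$ holds unconditionally, since $D^{1}_{0}(u,v)=\{v\}$ and $D^{0}_{1}(u,v)=\{u\}$, so $a_1=b_1=1$. For the inductive step, assuming $c_i=0$, the hypothesis $|S_i(u)|=|S_i(v)|$ together with the displayed identity forces $c_{i+1}=c_i=0$. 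Thus $c_i=0$ for all $i\ge1$ (the induction may be stopped at $i=d$, as $c_i=0$ automatically for $i>d$), so $uv$ obeys the SDB condition; since $uv$ was arbitrary, $\G$ is SDB.

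I do not expect a genuine obstacle: the argument is pure bookkeeping. The only points requiring care are the boundary conventions — that spheres beyond the diameter are empty — and the index shifts $|D^{i}_{i+1}(u,v)|=b_{i+1}$, $|D^{i+1}_{i}(u,v)|=a_{i+1}$, which must be lined up correctly so that the difference of the sphere sizes telescopes cleanly. It is perhaps worth stressing in the write-up that the equivalence is entirely \emph{local}: already the equalities $|S_i(u)|=|S_i(v)|$ for the single edge $uv$ suffice to show that this edge satisfies $|D^{i}_{i-1}(u,v)|=|D^{i-1}_{i}(u,v)|$ for all $i$.
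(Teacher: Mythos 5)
Your argument is correct and is essentially the proof the paper gives (for the edge-local version in Lemma~2.2, from which the proposition follows by quantifying over all edges): the same disjoint decomposition of $S_i(u)$ and $S_i(v)$ into the sets $D^i_j(u,v)$, the same base case $|D^1_0(u,v)|=|D^0_1(u,v)|=1$, and the same induction in which $|S_i(u)|=|S_i(v)|$ forces $|D^{i}_{i+1}(u,v)|=|D^{i+1}_{i}(u,v)|$. Your telescoping bookkeeping with $c_i=a_i-b_i$ is just a compact repackaging of the paper's equations \eqref{dkk1}--\eqref{dkk2}, and your remark that the equivalence is local to a single edge is exactly the content of the paper's Lemma~\ref{lem:edges strongly distance-balanced}.
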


We say that an edge $uv$ of a graph $\G$ is {\em strongly distance-balanced} if   $|D^i_{i-1}(u, v)|=|D^{i-1}_i(u, v)|$ holds for every $i\geq 1$. From the proof of \cite[Proposition 2.1]{KMMM05} the following result can be obtained. We include the proof here for the sake of completeness.

\begin{lemma}\label{lem:edges strongly distance-balanced}
	Let $\G$ be a graph with diameter $d$, and $uv$ an arbitrary edge of $\G$. Then the edge $uv$ is strongly distance-balanced if and only if $|S_i(u)|=|S_i(v)|$ for every $i\in \{1,\ldots,d\}$.
\end{lemma}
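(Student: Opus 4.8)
The plan is to establish both directions by tracking how the vertex shells $S_i(u)$ decompose into the sets $D^j_\ell(u,v)$. Fix the edge $uv$. By the triangle inequality (as noted just above, and pictured in Figure~\ref{01}), every vertex $x$ lies in exactly one of $D^{i-1}_i(u,v)$, $D^i_i(u,v)$, or $D^i_{i-1}(u,v)$ for a unique $i\in\{1,\dots,d\}$ (together with $u\in D^0_1$ and $v\in D^1_0$, which I will absorb into the bookkeeping). Consequently, for each $i$ we get the partition
$$
S_i(u) = D^{i}_{i+1}(u,v)\,\dot\cup\, D^i_i(u,v)\,\dot\cup\, D^i_{i-1}(u,v),
$$
and symmetrically $S_i(v) = D^{i+1}_{i}(u,v)\,\dot\cup\, D^i_i(u,v)\,\dot\cup\, D^{i-1}_i(u,v)$. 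Subtracting, the $D^i_i$ terms cancel, so
$$
|S_i(u)| - |S_i(v)| = \bigl(|D^i_{i+1}(u,v)| - |D^{i+1}_i(u,v)|\bigr) + \bigl(|D^i_{i-1}(u,v)| - |D^{i-1}_i(u,v)|\bigr).
$$
This single identity is the engine of the whole argument.

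For the forward direction, assume $uv$ is strongly distance-balanced, i.e.\ $|D^i_{i-1}(u,v)| = |D^{i-1}_i(u,v)|$ for all $i\ge 1$. Then every bracketed difference on the right-hand side vanishes (the first bracket is the $i\mapsto i+1$ instance of the defining equality, with sign reversed), so $|S_i(u)| = |S_i(v)|$ for every $i$, including trivially $i=0$ and for $i>d$ where both sides are $0$.

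For the converse, assume $|S_i(u)| = |S_i(v)|$ for all $i\in\{1,\dots,d\}$ (and hence for all $i\ge 0$, since $|S_0(u)|=|S_0(v)|=1$). Write $\delta_i := |D^i_{i-1}(u,v)| - |D^{i-1}_i(u,v)|$ for $i\ge 1$; I want $\delta_i = 0$ for all $i$. The displayed identity now reads $0 = -\delta_{i+1} + \delta_i$, i.e.\ $\delta_{i+1} = \delta_i$ for every $i\ge 1$, so all the $\delta_i$ are equal to a common value $\delta$. To pin down $\delta$, I use the top shell: for $i=d$ we have $D^{d}_{d+1}(u,v) = D^{d+1}_d(u,v) = \emptyset$, so the identity at level $d$ gives $0 = \delta_d$, hence $\delta = 0$ and $uv$ is strongly distance-balanced. (Equivalently, one can sum the identity over all $i$ and telescope, or observe that $\sum_i |D^i_{i-1}| = |W_{v,u}|$ and $\sum_i|D^{i-1}_i| = |W_{u,v}|$ together with the shell count $\sum_i|S_i(u)| = n = \sum_i |S_i(v)|$ force $\sum_i\delta_i = 0$; combined with all $\delta_i$ equal this again yields $\delta_i=0$.)

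The only subtlety — and the one place to be careful rather than an actual obstacle — is the indexing at the boundary: making sure that the shell decomposition is stated with the correct range of $i$, that the "off-diagonal" sets beyond distance $d$ are genuinely empty so the telescoping terminates, and that the base cases $i=0$ (and $i=1$, where $D^0_1 = \{u\}$, $D^1_0 = \{v\}$ and $|D^1_0|=|D^0_1|=1$ automatically) are handled consistently. Once the partition identity above is written down cleanly, both implications are immediate; I would present the partition identity first, then read off the two directions.
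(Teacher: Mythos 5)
Your proof is correct and follows essentially the same route as the paper's: both rest on the disjoint decompositions $S_i(u)=D^i_{i+1}(u,v)\,\dot\cup\,D^i_i(u,v)\,\dot\cup\,D^i_{i-1}(u,v)$ and $S_i(v)=D^{i+1}_{i}(u,v)\,\dot\cup\,D^i_i(u,v)\,\dot\cup\,D^{i-1}_{i}(u,v)$, read off the forward direction termwise, and for the converse propagate the equality $|D^i_{i-1}|=|D^{i-1}_i|$ through the resulting recurrence. The only cosmetic difference is that the paper runs the induction upward from the base case $|D^1_0|=|D^0_1|=1$, whereas you anchor the common difference $\delta$ at the top shell $i=d$; both anchors are valid.
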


\begin{proof}
Assume first the edge $uv$ of $\G$ is strongly distance-balanced. Then, by definition, we have   $|D^i_{i-1}(u, v)|=|D^{i-1}_i(u, v)|$ for every $i\geq 1$. However, since $S_i(u)= D^i_{i+1}(u,v) \cup D^i_{i}(u,v) \cup D^i_{i-1}(u,v) $ (disjoint union) and $S_i(v)= D^{i-1}_{i}(u,v) \cup D^i_{i}(u,v) \cup D^{i+1}_i(u,v) $ (disjoint union), we have also $|S_i(u)|=|S_i(v)|$ for every $i\in \{1,\ldots,d\}$. 

Next assume  that $|S_i(u)|=|S_i(v)|$ holds for every $i\in \{1,\ldots,d\}$. Using induction we show that $|D^i_{i-1}(u, v)|=|D^{i-1}_i(u, v)|$ holds for every $i\in \{1,\ldots,d\}$. Obviously, we have $|D^1_{0}(u, v)|=|D^{0}_1(u, v)|=1$. Suppose now that $|D^k_{k-1}(u, v)|=|D^{k-1}_k(u, v)|$ holds for $1\leq k \leq d$. We observe
\begin{eqnarray}
	|D^k_{k+1}(u,v)|&=&|S_k(u)|-|D^k_k(u,v)|-|D^k_{k-1}(u,v)| \label{dkk1} \\ 
	|D^{k+1}_k(u,v)|&=&|S_k(v)|-|D^k_k(u,v)|-|D^{k-1}_k(u,v)|  \label{dkk2}
\end{eqnarray} 
Since $|S_k(u)|=|S_k(v)|$ and in view of the induction hypothesis, $|D^k_{k-1}(u,v)|=|D^{k-1}_k(u,v)|$, it follows from \eqref{dkk1} and \eqref{dkk2} that $|D^k_{k+1}(u,v)| = 	|D^{k+1}_k(u,v)|$. This finishes the proof. 
\end{proof}

\bigskip 
{\em {An} Automorphism} of a graph is a permutation of its vertex set that preserves the adjacency relation of the graph. The set of all automorphisms of a graph $\G$ is called {\em the automorphism group} and denoted by $\Aut(\G)$. A graph is {\em vertex-transitive} if its automorphism group acts transitively on the vertex-set, and it is called {\em edge-transitive} if its automorphism group acts transitively on the edge set.
Kutnar  {et al.}\ \cite{KMMM05} used Proposition~\ref{prop:SDB characterization} to prove that vertex-transitive graphs are strongly distance-balanced.
Lemma~\ref{lem:edges strongly distance-balanced} implies that in order to check if a given graph is strongly distance-balanced, one only needs to check the pairs of adjacent vertices that belong to different orbits under the action of the automorphism group of the graph.


\section{Constructions of nonbipartite NDB graphs that are not SDB}

Nicely distance-balanced graphs were studied in \cite{KM}, 
where it is proved that in the class of bipartite graphs, DB and NDB properties coincide, while there are examples of bipartite {NDB} graphs that are not {SDB} given by Handa \cite{Ha}. In \cite{KM} examples of nonbipartite SDB graphs that are not NDB were constructed and the following problem was posed.
\begin{problem}\cite[Problem 3.3]{KM}\label{problem:NDB not SDB}
Find a nonbipartite NDB graph which is not SDB.
\end{problem}

In this section we will construct several infinite families of nonbipartite NDB graphs which are not SDB and so, solve Problem~\ref{problem:NDB not SDB}. To do this, we first study the { Cartesian} product of graphs. 
NDB graphs in the framework of the { Cartesian} graph product were studied in \cite{KM}. We start this section with the definition of this product. 

Let $G$ and $H$ denote connected graphs. The {\it{{ Cartesian} product of $G$ and $H$}}, denoted by $G \square H$, is the graph with vertex set $V(G)\times V(H)$ where two vertices $(g_1, h_1)$ and $(g_2, h_2)$ are adjacent if and only if $g_1=g_2$ and $h_1 \sim h_2$ in $H$, or $h_1=h_2$ and $g_1 \sim g_2$ in $G$. We observe the { Cartesian} product is commutative and that $$d_{G \square H}\left((g_1, h_1), (g_2, h_2) \right)=d_G(g_1,g_2)+d_H(h_1,h_2).$$ 

The next result is a direct consequence of \cite[Theorem 4.1]{KM}. 

\begin{lemma}\label{cartGH}
	Let $G$ and $H$ denote connected NDB graphs with $|V(H)|\gamma_G=|V(G)|\gamma_H$. Then, the { Cartesian} product $G \square H$ is NDB with {$\gamma_{G\square H}=|V(H)|\cdot{\gamma_G}=|V(G)|\cdot{\gamma_H}$}. In particular, the { Cartesian} product of $n$-copies of $G$ is NDB with {$\gamma=|V(G)|^{n-1}\cdot{\gamma_G}$}. 
\end{lemma}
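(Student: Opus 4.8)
The plan is to reduce the computation of the sets $W_{u,v}$ in $G\square H$ to the corresponding sets in the factors, using the additive distance formula for the Cartesian product recorded in Section~\ref{sec:prelim}. First I would note that, by the definition of $\square$, every edge of $G\square H$ is of exactly one of two types: either $(g_1,h)(g_2,h)$ with $g_1\sim g_2$ in $G$, or $(g,h_1)(g,h_2)$ with $h_1\sim h_2$ in $H$; by commutativity of the product it suffices to analyse the first type and then appeal to symmetry for the second.

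Fix such an edge $e=(g_1,h)(g_2,h)$ with $g_1g_2\in E(G)$. For an arbitrary vertex $(x,y)\in V(G\square H)$ the distance formula gives $d_{G\square H}\big((x,y),(g_i,h)\big)=d_G(x,g_i)+d_H(y,h)$ for $i=1,2$, so the term $d_H(y,h)$ cancels and $(x,y)\in W_{(g_1,h),(g_2,h)}$ if and only if $x\in W_{g_1,g_2}$ in $G$. Hence $W_{(g_1,h),(g_2,h)}=W_{g_1,g_2}\times V(H)$, and since $G$ is NDB this yields $|W_{(g_1,h),(g_2,h)}|=|V(H)|\cdot\gamma_G$, independently of the chosen $G$-edge $g_1g_2$ and of $h$; the same computation applied to $W_{(g_2,h),(g_1,h)}=W_{g_2,g_1}\times V(H)$ shows the edge $e$ is balanced. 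Symmetrically, every edge of the second type $(g,h_1)(g,h_2)$ is balanced with $|W_{(g,h_1),(g,h_2)}|=|V(G)|\cdot\gamma_H$. Therefore $G\square H$ is NDB exactly when these two values coincide, i.e.\ when $|V(H)|\gamma_G=|V(G)|\gamma_H$, which is the hypothesis, and in that case $\gamma_{G\square H}=|V(H)|\gamma_G=|V(G)|\gamma_H$.

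For the ``in particular'' statement I would argue by induction on $n$, writing $G^{\square n}=G\,\square\,G^{\square(n-1)}$. The inductive hypothesis gives that $G^{\square(n-1)}$ is NDB with $\gamma_{G^{\square(n-1)}}=|V(G)|^{n-2}\gamma_G$, while clearly $|V(G^{\square(n-1)})|=|V(G)|^{n-1}$; the compatibility condition $|V(G^{\square(n-1)})|\,\gamma_G=|V(G)|\,\gamma_{G^{\square(n-1)}}$ then holds automatically, since both sides equal $|V(G)|^{n-1}\gamma_G$, so the first part of the lemma applies and gives $\gamma_{G^{\square n}}=|V(G)|^{n-1}\gamma_G$.

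I do not expect a genuine obstacle here: the only points requiring care are the justification of the additive distance formula (standard for the Cartesian product and already quoted in the preliminaries) and the observation that each edge of $G\square H$ lies in exactly one factor, which is precisely what makes the cancellation $d_H(y,h)-d_H(y,h)=0$ (resp.\ $d_G(x,g)-d_G(x,g)=0$) clean. Alternatively, the entire statement can be read off directly from \cite[Theorem 4.1]{KM}, which records the value of $\gamma$ for Cartesian products of NDB graphs; the argument above is simply an unpacking of that result tailored to the form in which we shall use it.
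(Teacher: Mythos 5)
Your argument is correct. The paper itself disposes of Lemma~\ref{cartGH} in one line, citing \cite[Theorem 4.1]{KM} together with ``a straightforward induction argument,'' which is exactly the alternative route you mention at the end of your proposal. What you add is a complete, self-contained unpacking of that citation: the observation that every edge of $G\,\square\,H$ lies in exactly one factor, the cancellation $d_H(y,h)-d_H(y,h)=0$ in the additive distance formula giving $W_{(g_1,h),(g_2,h)}=W_{g_1,g_2}\times V(H)$, and hence the two candidate values $|V(H)|\gamma_G$ and $|V(G)|\gamma_H$ whose coincidence is precisely the hypothesis. This is the standard proof of the quoted theorem and it is carried out without error; your induction for the $n$-fold product, including the verification that the compatibility condition $|V(G^{\square(n-1)})|\,\gamma_G=|V(G)|\,\gamma_{G^{\square(n-1)}}$ holds automatically, matches the ``straightforward induction'' the paper alludes to. The only point you leave implicit is that the Cartesian product of connected graphs is connected (so the inductive hypothesis applies to $G^{\square(n-1)}$), but this is standard and harmless. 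In short: the paper buys brevity by outsourcing to \cite{KM}; you buy self-containedness at the cost of a paragraph.
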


\begin{proof}
	Immediate from \cite[Theorem 4.1]{KM} and a straightforward induction argument. 
\end{proof}

\bigskip 

It was proved in \cite[Theorem 3.3]{KMMM05} that the { Cartesian} product of graphs is SDB if and only if both factors are SDB. Similarly, the { Cartesian} product of graphs is bipartite if and only if both factors are bipartite. Therefore the next results holds: 

\begin{lemma}\label{SDBGH}
	Let $G$ and $H$ denote connected graphs. Then, the { Cartesian} product $G \square H$ is SDB if and only if both $G$ and $H$ are SDB. In particular, the { Cartesian} product of $n$-copies of $G$ is SDB if and only if $G$ is SDB. 
\end{lemma}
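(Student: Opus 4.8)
The plan is to deduce the biconditional from the known characterization of SDB Cartesian products and then to close the ``in particular'' clause by a short induction of exactly the shape used for Lemma~\ref{cartGH}.

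For the first assertion, the cleanest route is simply to invoke \cite[Theorem 3.3]{KMMM05}, which states that $G\square H$ is SDB if and only if both $G$ and $H$ are. If one prefers a self-contained argument, I would run it through Proposition~\ref{prop:SDB characterization}: using $d_{G\square H}((g_1,h_1),(g_2,h_2))=d_G(g_1,g_2)+d_H(h_1,h_2)$ one gets, writing $S_a^{G}(g)$ for the sphere of radius $a$ around $g$ in $G$, the disjoint-union decomposition $S_i\bigl((g,h)\bigr)=\bigcup_{a+b=i}S_a^{G}(g)\times S_b^{H}(h)$, hence
$$|S_i((g,h))|=\sum_{a+b=i}|S_a^{G}(g)|\cdot|S_b^{H}(h)|.$$
Every edge of $G\square H$ has the form $(g,h)(g',h)$ with $g\sim g'$ in $G$, or the symmetric form with the roles of $G$ and $H$ exchanged. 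If $G$ (resp.\ $H$) is SDB, then $|S_a^{G}(g)|=|S_a^{G}(g')|$ for all $a$, and the displayed convolution immediately yields $|S_i((g,h))|=|S_i((g',h))|$ for all $i$; by Proposition~\ref{prop:SDB characterization}, $G\square H$ is SDB. Conversely, if $G\square H$ is SDB then for a fixed $h$ and an edge $g\sim g'$ of $G$ we have $|S_i((g,h))|=|S_i((g',h))|$ for all $i$; taking the least index $a_0$ at which $|S_{a_0}^{G}(g)|$ and $|S_{a_0}^{G}(g')|$ could differ and reading off the $b=0$ term of the convolution shows the difference must vanish, so $G$ is SDB (and symmetrically $H$ is SDB).

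For the ``in particular'' part, write $G^{\square n}$ for the Cartesian product of $n$ copies of $G$ and induct on $n$. The case $n=1$ is trivial. For $n\ge 2$, use associativity and commutativity of $\square$ to write $G^{\square n}=G^{\square(n-1)}\square G$; by the biconditional just established this is SDB iff both $G^{\square(n-1)}$ and $G$ are SDB, and by the inductive hypothesis the former holds iff $G$ is SDB. Hence $G^{\square n}$ is SDB iff $G$ is SDB.

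There is no genuine obstacle: the statement is essentially a repackaging of \cite[Theorem 3.3]{KMMM05} plus an induction identical in form to the one behind Lemma~\ref{cartGH}. The only spot that needs a little care, and only if one takes the self-contained route, is the converse of the sphere-size comparison — verifying that equality of all the $G\square H$-spheres across the edge $(g,h)(g',h)$ forces equality of all the $G$-spheres across $gg'$ — which is handled by locating the smallest index where a discrepancy could first surface in the convolution.
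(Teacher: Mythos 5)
Your primary route is exactly what the paper does: Lemma~\ref{SDBGH} is stated there without proof, as an immediate consequence of \cite[Theorem 3.3]{KMMM05} together with the same associativity-plus-induction step for the $n$-fold product. Your optional self-contained argument via Proposition~\ref{prop:SDB characterization} and the convolution $|S_i((g,h))|=\sum_{a+b=i}|S_a^{G}(g)|\cdot|S_b^{H}(h)|$ is also correct --- in particular the converse direction works because the $b=0$ term isolates $|S_{a_0}^{G}(g)|$ with coefficient $|S_0^H(h)|=1$ at the first index where a discrepancy could occur --- but it is not needed to match the paper.
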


\begin{lemma}\label{bipartiteGH}
	Let $G$ and $H$ denote connected graphs. Then, the { Cartesian} product $G \square H$ is bipartite if and only if both $G$ and $H$ are bipartite. In particular, the { Cartesian} product of $n$-copies of $G$ is bipartite if and only if $G$ is bipartite. 
\end{lemma}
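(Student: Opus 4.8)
The plan is to use the standard fact that a graph is bipartite if and only if it admits a proper $2$-coloring of its vertex set, treating the two implications separately, and then to obtain the ``in particular'' clause by an immediate induction on $n$.

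For the ``if'' direction, suppose $G$ and $H$ are bipartite, with proper $2$-colorings $c_G\colon V(G)\to\{0,1\}$ and $c_H\colon V(H)\to\{0,1\}$. I would define $c\colon V(G)\times V(H)\to\{0,1\}$ by $c(g,h)=c_G(g)+c_H(h)\pmod{2}$ and verify that it is a proper $2$-coloring of $G\square H$. Indeed, if $(g_1,h_1)\sim(g_2,h_2)$ in $G\square H$, then by the definition of the Cartesian product either $g_1=g_2$ and $h_1\sim h_2$ in $H$, or $h_1=h_2$ and $g_1\sim g_2$ in $G$. In the first case $c_H(h_1)\neq c_H(h_2)$ while $c_G(g_1)=c_G(g_2)$, so $c(g_1,h_1)\neq c(g_2,h_2)$; the second case is symmetric. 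Hence $G\square H$ is bipartite.

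For the ``only if'' direction, I would use that any subgraph of a bipartite graph is bipartite. Fix any $h_0\in V(H)$ and consider the subgraph of $G\square H$ induced on $V(G)\times\{h_0\}$: two of its vertices $(g_1,h_0)$ and $(g_2,h_0)$ are adjacent precisely when $g_1\sim g_2$ in $G$, so this subgraph is isomorphic to $G$. Since $G\square H$ is bipartite, so is this subgraph, and therefore $G$ is bipartite. By the commutativity of the Cartesian product (equivalently, by the symmetric argument fixing some $g_0\in V(G)$), $H$ is also bipartite.

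Finally, the ``in particular'' statement follows by induction on $n$: the Cartesian product of $n$ copies of $G$ can be written as $G\square G'$, where $G'$ is the Cartesian product of $n-1$ copies of $G$; by the first part it is bipartite if and only if both $G$ and $G'$ are bipartite, and by the induction hypothesis $G'$ is bipartite if and only if $G$ is. I do not anticipate any real obstacle here; the only point requiring a little care is correctly translating the adjacency condition in the Cartesian product into the two cases used for the coloring and for the embedded copy of $G$, and this is routine.
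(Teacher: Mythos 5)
Your proof is correct. Note that the paper does not actually prove this lemma: it simply records the well-known fact that a Cartesian product is bipartite if and only if both factors are, and derives the $n$-fold statement implicitly. Your argument --- the parity $2$-coloring $c(g,h)=c_G(g)+c_H(h)\pmod 2$ for one direction, the $G$-layer $V(G)\times\{h_0\}$ as an induced copy of $G$ for the other, and induction for the $n$-fold product --- is the standard proof and correctly fills in the details the paper omits.
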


We now show how the above results can be used to construct infinitely many examples of nonbipartite NDB graphs which are not SDB, provided that at least one such example exists.

\begin{proposition}\label{cart}
	Let $G$ denote a nonbipartite NDB graph which is not SDB. If $H$ is a NDB graph and {$|V(H)|\cdot{\gamma_G}=|V(G)|\cdot{\gamma_H}$} then the { Cartesian} product $G \square H$ is a nonbipartite NDB graph with {$\gamma_{G\square H}=|V(H)|\cdot{\gamma_G}=|V(G)|\cdot{\gamma_H}$} which is not SDB. In particular, the { Cartesian} product of $n$-copies of $G$ is a nonbipartite NDB graph with {$\gamma=|V(G)|^{n-1}\cdot{\gamma_G}$} that is not SDB. 
\end{proposition}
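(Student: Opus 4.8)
The plan is to combine the three lemmas that have just been assembled---Lemma~\ref{cartGH} on the NDB-closure of the Cartesian product, Lemma~\ref{SDBGH} on the SDB-closure, and Lemma~\ref{bipartiteGH} on the bipartite-closure---into a single deduction. First I would check the hypotheses of Lemma~\ref{cartGH}: we are given that $G$ is NDB, that $H$ is NDB, and that $|V(H)|\cdot\gamma_G=|V(G)|\cdot\gamma_H$, which is exactly the balancing condition required. Hence $G\square H$ is NDB with $\gamma_{G\square H}=|V(H)|\cdot\gamma_G=|V(G)|\cdot\gamma_H$, giving the ``NDB with the stated $\gamma$'' part of the conclusion.

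Next I would dispatch the two negative assertions. Since $G$ is not SDB, Lemma~\ref{SDBGH} (in its contrapositive form: if a Cartesian product is SDB then both factors are) immediately yields that $G\square H$ is not SDB. Likewise, since $G$ is not bipartite, Lemma~\ref{bipartiteGH} gives that $G\square H$ is not bipartite. Putting these together with the previous paragraph, $G\square H$ is a nonbipartite NDB graph with the claimed value of $\gamma$ that fails to be SDB.

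For the ``in particular'' clause about the Cartesian product of $n$ copies of $G$, I would argue by induction on $n$, taking $H$ in the first part to be the product of $n-1$ copies of $G$. The base case $n=1$ is the hypothesis on $G$ itself. For the inductive step, the induction hypothesis says the product $G^{\square(n-1)}$ of $n-1$ copies is a nonbipartite NDB graph that is not SDB, with $\gamma_{G^{\square(n-1)}}=|V(G)|^{n-2}\cdot\gamma_G$ (for $n\ge 2$; for $n=2$ this is just $\gamma_G$). One then needs the balancing identity $|V(G^{\square(n-1)})|\cdot\gamma_G=|V(G)|\cdot\gamma_{G^{\square(n-1)}}$, which holds because $|V(G^{\square(n-1)})|=|V(G)|^{n-1}$ and $\gamma_{G^{\square(n-1)}}=|V(G)|^{n-2}\cdot\gamma_G$, so both sides equal $|V(G)|^{n-1}\cdot\gamma_G$. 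Applying the first part with $H=G^{\square(n-1)}$ then gives that $G^{\square n}$ is nonbipartite, NDB with $\gamma=|V(G)|^{n-1}\cdot\gamma_G$, and not SDB, completing the induction.

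I do not anticipate a genuine obstacle here: the proposition is essentially a bookkeeping corollary of the three structural lemmas, and the only thing that requires a moment's care is verifying that the Cartesian-product balancing condition $|V(H)|\gamma_G=|V(G)|\gamma_H$ propagates correctly through the induction, which it does because $\gamma$ and $|V|$ both scale by exactly the right powers of $|V(G)|$. The proof can therefore be written in a few lines, simply citing Lemmas~\ref{cartGH}, \ref{SDBGH} and \ref{bipartiteGH} and the straightforward induction.
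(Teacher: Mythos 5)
Your proposal is correct and follows exactly the route the paper takes: the paper's proof is the one-line ``Immediate from Lemmas~\ref{cartGH}, \ref{SDBGH} and \ref{bipartiteGH},'' and you have simply spelled out the same deduction, including the induction for the $n$-fold product that the paper leaves implicit. No further changes are needed.
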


\begin{proof}
	Immediate from Lemmas \ref{cartGH}, \ref{SDBGH} and \ref{bipartiteGH}.
\end{proof}
\bigskip 

We will now construct an example of a nonbipartite NDB graph which is not SDB.

\begin{definition}\label{def:tricirc}
	Let $\G$ be the graph with vertex set $V=\{0,1,2\}\times \mathbb{Z}_{10}$ where the adjacencies are $(0,j) \sim (1,j+1)$, $(0,j) \sim (1,j+4)$, $(0,j) \sim (2,j+1)$, $(0,j) \sim (2,j+4)$, $(1,j) \sim (1,j+4)$ and $(2,j) \sim (2,j+4)$ for every $j\in \mathbb{Z}_{10}$  with all the computations in the second component performed modulo $10$. A graphical representation of $\G$ is shown in  Figure \ref{fig:tricirc}.
\end{definition}

{\small\begin{figure}[!ht]{\rm
			\begin{center}
				\begin{tikzpicture}[line cap=round,line join=round,>=triangle 45,x=1cm,y=1cm, scale=0.65]
					\fill[line width=1.5pt,color=ffffff,fill=ffffff,fill opacity=0.1] (4,8) -- (6.110298513851125,7.682709723376317) -- (8.215359754403861,8.033066842220464) -- (10.109125660608816,9.016775960658391) -- (11.606221230923719,10.537544776272947) -- (12.560100320047956,12.446509841572391) -- (12.87739059667164,14.556808355423515) -- (12.52703347782749,16.661869595976253) -- (11.543324359389565,18.555635502181207) -- (10.02255554377501,20.05273107249611) -- (8.113590478475565,21.006610161620348) -- (6.003291964624442,21.32390043824403) -- (3.8982307240717047,20.973543319399884) -- (2.00446481786675,19.98983420096196) -- (0.5073692475518472,18.469065385347402) -- (-0.44650984157239115,16.560100320047958) -- (-0.7638001181960741,14.449801806196833) -- (-0.4134429993519273,12.344740565644097) -- (0.570266119085999,10.45097465943914) -- (2.091034934700554,8.95387908912424) -- cycle;
					\draw [line width=1.5pt,color=ffffff] (4,8)-- (6.110298513851125,7.682709723376317);
					\draw [line width=1.5pt,color=ffffff] (6.110298513851125,7.682709723376317)-- (8.215359754403861,8.033066842220464);
					\draw [line width=1.5pt,color=ffffff] (8.215359754403861,8.033066842220464)-- (10.109125660608816,9.016775960658391);
					\draw [line width=1.5pt,color=ffffff] (10.109125660608816,9.016775960658391)-- (11.606221230923719,10.537544776272947);
					\draw [line width=1.5pt,color=ffffff] (11.606221230923719,10.537544776272947)-- (12.560100320047956,12.446509841572391);
					\draw [line width=1.5pt,color=ffffff] (12.560100320047956,12.446509841572391)-- (12.87739059667164,14.556808355423515);
					\draw [line width=1.5pt,color=ffffff] (12.87739059667164,14.556808355423515)-- (12.52703347782749,16.661869595976253);
					\draw [line width=1.5pt,color=ffffff] (12.52703347782749,16.661869595976253)-- (11.543324359389565,18.555635502181207);
					\draw [line width=1.5pt,color=ffffff] (11.543324359389565,18.555635502181207)-- (10.02255554377501,20.05273107249611);
					\draw [line width=1.5pt,color=ffffff] (10.02255554377501,20.05273107249611)-- (8.113590478475565,21.006610161620348);
					\draw [line width=1.5pt,color=ffffff] (8.113590478475565,21.006610161620348)-- (6.003291964624442,21.32390043824403);
					\draw [line width=1.5pt,color=ffffff] (6.003291964624442,21.32390043824403)-- (3.8982307240717047,20.973543319399884);
					\draw [line width=1.5pt,color=ffffff] (3.8982307240717047,20.973543319399884)-- (2.00446481786675,19.98983420096196);
					\draw [line width=1.5pt,color=ffffff] (2.00446481786675,19.98983420096196)-- (0.5073692475518472,18.469065385347402);
					\draw [line width=1.5pt,color=ffffff] (0.5073692475518472,18.469065385347402)-- (-0.44650984157239115,16.560100320047958);
					\draw [line width=1.5pt,color=ffffff] (-0.44650984157239115,16.560100320047958)-- (-0.7638001181960741,14.449801806196833);
					\draw [line width=1.5pt,color=ffffff] (-0.7638001181960741,14.449801806196833)-- (-0.4134429993519273,12.344740565644097);
					\draw [line width=1.5pt,color=ffffff] (-0.4134429993519273,12.344740565644097)-- (0.570266119085999,10.45097465943914);
					\draw [line width=1.5pt,color=ffffff] (0.570266119085999,10.45097465943914)-- (2.091034934700554,8.95387908912424);
					\draw [line width=1.5pt,color=ffffff] (2.091034934700554,8.95387908912424)-- (4,8);
					\draw [line width=1.5pt] (3.8982307240717047,20.973543319399884)-- (-0.7638001181960741,14.449801806196833);
					\draw [line width=1.5pt] (-0.7638001181960741,14.449801806196833)-- (4,8);
					\draw [line width=1.5pt] (4,8)-- (11.606221230923719,10.537544776272947);
					\draw [line width=1.5pt] (11.606221230923719,10.537544776272947)-- (11.543324359389565,18.555635502181207);
					\draw [line width=1.5pt] (11.543324359389565,18.555635502181207)-- (3.8982307240717047,20.973543319399884);
					\draw [line width=1.5pt] (8.113590478475565,21.006610161620348)-- (12.87739059667164,14.556808355423515);
					\draw [line width=1.5pt] (12.87739059667164,14.556808355423515)-- (8.215359754403861,8.033066842220464);
					\draw [line width=1.5pt] (8.215359754403861,8.033066842220464)-- (0.570266119085999,10.45097465943914);
					\draw [line width=1.5pt] (0.570266119085999,10.45097465943914)-- (0.5073692475518472,18.469065385347402);
					\draw [line width=1.5pt] (0.5073692475518472,18.469065385347402)-- (8.113590478475565,21.006610161620348);
					\draw [line width=1.5pt] (4.69184146819216,18.341401237899404)-- (9.334486065903286,16.913969253817122);
					\draw [line width=1.5pt] (9.334486065903286,16.913969253817122)-- (9.41157365540949,12.05745111492569);
					\draw [line width=1.5pt] (9.41157365540949,12.05745111492569)-- (4.816571808124001,10.483389822192684);
					\draw [line width=1.5pt] (4.816571808124001,10.483389822192684)-- (1.8996168986268076,14.36708458179952);
					\draw [line width=1.5pt] (1.8996168986268076,14.36708458179952)-- (4.69184146819216,18.341401237899404);
					\draw [line width=1.5pt] (7.2450641503782975,18.381928582061086)-- (10.16201905987549,14.49823382245425);
					\draw [line width=1.5pt] (10.16201905987549,14.49823382245425)-- (7.369794490310139,10.523917166354366);
					\draw [line width=1.5pt] (7.369794490310139,10.523917166354366)-- (2.7271498925990123,11.951349150436647);
					\draw [line width=1.5pt] (2.7271498925990123,11.951349150436647)-- (2.6500623030928065,16.80786728932808);
					\draw [line width=1.5pt] (2.6500623030928065,16.80786728932808)-- (7.2450641503782975,18.381928582061086);
					\draw [line width=1.5pt] (4.69184146819216,18.341401237899404)-- (6.003291964624442,21.32390043824403);
					\draw [line width=1.5pt] (6.003291964624442,21.32390043824403)-- (7.2450641503782975,18.381928582061086);
					\draw [line width=1.5pt] (7.2450641503782975,18.381928582061086)-- (10.02255554377501,20.05273107249611);
					\draw [line width=1.5pt] (10.02255554377501,20.05273107249611)-- (9.334486065903286,16.913969253817122);
					\draw [line width=1.5pt] (9.334486065903286,16.913969253817122)-- (12.52703347782749,16.661869595976253);
					\draw [line width=1.5pt] (12.52703347782749,16.661869595976253)-- (10.16201905987549,14.49823382245425);
					\draw [line width=1.5pt] (10.16201905987549,14.49823382245425)-- (12.560100320047956,12.446509841572391);
					\draw [line width=1.5pt] (12.560100320047956,12.446509841572391)-- (9.41157365540949,12.05745111492569);
					\draw [line width=1.5pt] (9.41157365540949,12.05745111492569)-- (10.109125660608816,9.016775960658391);
					\draw [line width=1.5pt] (10.109125660608816,9.016775960658391)-- (7.369794490310139,10.523917166354366);
					\draw [line width=1.5pt] (7.369794490310139,10.523917166354366)-- (6.110298513851125,7.682709723376317);
					\draw [line width=1.5pt] (6.110298513851125,7.682709723376317)-- (4.816571808124001,10.483389822192684);
					\draw [line width=1.5pt] (4.816571808124001,10.483389822192684)-- (2.091034934700554,8.95387908912424);
					\draw [line width=1.5pt] (2.091034934700554,8.95387908912424)-- (2.7271498925990123,11.951349150436647);
					\draw [line width=1.5pt] (2.7271498925990123,11.951349150436647)-- (-0.4134429993519273,12.344740565644097);
					\draw [line width=1.5pt] (-0.4134429993519273,12.344740565644097)-- (1.8996168986268076,14.36708458179952);
					\draw [line width=1.5pt] (1.8996168986268076,14.36708458179952)-- (-0.44650984157239115,16.560100320047958);
					\draw [line width=1.5pt] (-0.44650984157239115,16.560100320047958)-- (2.6500623030928065,16.80786728932808);
					\draw [line width=1.5pt] (2.6500623030928065,16.80786728932808)-- (2.00446481786675,19.98983420096196);
					\draw [line width=1.5pt] (2.00446481786675,19.98983420096196)-- (4.69184146819216,18.341401237899404);
					\draw [line width=1.5pt] (3.8982307240717047,20.973543319399884)-- (6.003291964624442,21.32390043824403);
					\draw [line width=1.5pt] (6.003291964624442,21.32390043824403)-- (8.113590478475565,21.006610161620348);
					\draw [line width=1.5pt] (8.113590478475565,21.006610161620348)-- (10.02255554377501,20.05273107249611);
					\draw [line width=1.5pt] (10.02255554377501,20.05273107249611)-- (11.543324359389565,18.555635502181207);
					\draw [line width=1.5pt] (11.543324359389565,18.555635502181207)-- (12.52703347782749,16.661869595976253);
					\draw [line width=1.5pt] (12.52703347782749,16.661869595976253)-- (12.87739059667164,14.556808355423515);
					\draw [line width=1.5pt] (12.87739059667164,14.556808355423515)-- (12.560100320047956,12.446509841572391);
					\draw [line width=1.5pt] (12.560100320047956,12.446509841572391)-- (11.606221230923719,10.537544776272947);
					\draw [line width=1.5pt] (11.606221230923719,10.537544776272947)-- (10.109125660608816,9.016775960658391);
					\draw [line width=1.5pt] (10.109125660608816,9.016775960658391)-- (8.215359754403861,8.033066842220464);
					\draw [line width=1.5pt] (8.215359754403861,8.033066842220464)-- (6.110298513851125,7.682709723376317);
					\draw [line width=1.5pt] (6.110298513851125,7.682709723376317)-- (4,8);
					\draw [line width=1.5pt] (4,8)-- (2.091034934700554,8.95387908912424);
					\draw [line width=1.5pt] (2.091034934700554,8.95387908912424)-- (0.570266119085999,10.45097465943914);
					\draw [line width=1.5pt] (0.570266119085999,10.45097465943914)-- (-0.4134429993519273,12.344740565644097);
					\draw [line width=1.5pt] (-0.4134429993519273,12.344740565644097)-- (-0.7638001181960741,14.449801806196833);
					\draw [line width=1.5pt] (-0.7638001181960741,14.449801806196833)-- (-0.44650984157239115,16.560100320047958);
					\draw [line width=1.5pt] (-0.44650984157239115,16.560100320047958)-- (0.5073692475518472,18.469065385347402);
					\draw [line width=1.5pt] (0.5073692475518472,18.469065385347402)-- (2.00446481786675,19.98983420096196);
					\draw [line width=1.5pt] (2.00446481786675,19.98983420096196)-- (3.8982307240717047,20.973543319399884);
					\begin{scriptsize}
						\draw [fill=uuuuuu] (4,8) circle (5.5pt);
						\draw[color=uuuuuu] (4,7.4) node { $(1,6)$};
						\draw [fill=uuuuuu] (6.110298513851125,7.682709723376317) circle (5.5pt);
						\draw[color=uuuuuu] (6.25,7) node { $(0,5)$};
						\draw [fill=uuuuuu] (8.215359754403861,8.033066842220464) circle (5.5pt);
						\draw[color=uuuuuu] (8.5,7.4) node { $(1,9)$};
						\draw [fill=uuuuuu] (10.109125660608816,9.016775960658391) circle (5.5pt);
						\draw[color=uuuuuu] (10.446724339151393,8.5) node { $(0,8)$};
						\draw [fill=uuuuuu] (11.606221230923719,10.537544776272947) circle (5.5pt);
						\draw[color=uuuuuu] (12,10) node { $(1,2)$};
						\draw [fill=uuuuuu] (12.560100320047956,12.446509841572391) circle (5.5pt);
						\draw[color=uuuuuu] (13.2,12) node { $(0,1)$};
						\draw [fill=uuuuuu] (12.87739059667164,14.556808355423515) circle (5.5pt);
						\draw[color=uuuuuu] (13.5,15) node {$(1,5)$};
						\draw [fill=uuuuuu] (12.52703347782749,16.661869595976253) circle (5.5pt);
						\draw[color=uuuuuu] (13.2,17) node { $(0,4)$};
						\draw [fill=uuuuuu] (11.543324359389565,18.555635502181207) circle (5.5pt);
						\draw[color=uuuuuu] (12,19) node { $(1,8)$};
						\draw [fill=uuuuuu] (10.02255554377501,20.05273107249611) circle (5.5pt);
						\draw[color=uuuuuu] (10.5,20.5) node { $(0,7)$};
						\draw [fill=uuuuuu] (8.113590478475565,21.006610161620348) circle (5.5pt);
						\draw[color=uuuuuu] (8.5,21.5) node { $(1,1)$};
						\draw [fill=uuuuuu] (6.003291964624442,21.32390043824403) circle (5.5pt);
						\draw[color=uuuuuu] (6.25,22) node { $(0,0)$};
						\draw [fill=uuuuuu] (3.8982307240717047,20.973543319399884) circle (5.5pt);
						\draw[color=uuuuuu] (4,21.5) node { $(1,4)$};
						\draw [fill=uuuuuu] (2.00446481786675,19.98983420096196) circle (5.5pt);
						\draw[color=uuuuuu] (1.5,20.5) node { $(0,3)$};
						\draw [fill=uuuuuu] (0.5073692475518472,18.469065385347402) circle (5.5pt);
						\draw[color=uuuuuu] (0,19) node { $(1,7)$};
						\draw [fill=uuuuuu] (-0.44650984157239115,16.560100320047958) circle (5.5pt);
						\draw[color=uuuuuu] (-1,17) node { $(0,6)$};
						\draw [fill=uuuuuu] (-0.7638001181960741,14.449801806196833) circle (5.5pt);
						\draw[color=uuuuuu] (-1.5,15) node { $(1,0)$};
						\draw [fill=uuuuuu] (-0.4134429993519273,12.344740565644097) circle (5.5pt);
						\draw[color=uuuuuu] (-1,12) node { $(0,9)$};
						\draw [fill=uuuuuu] (0.570266119085999,10.45097465943914) circle (5.5pt);
						\draw[color=uuuuuu] (0,10) node { $(1,3)$};
						\draw [fill=uuuuuu] (2.091034934700554,8.95387908912424) circle (5.5pt);
						\draw[color=uuuuuu] (1.5,8.5) node { $(0,2)$};
						\draw [fill=uuuuuu] (4.69184146819216,18.341401237899404) circle (5.5pt);
						\draw[color=uuuuuu] (4.2,19.212739137375625) node { $(2,4)$};
						\draw [fill=uuuuuu] (7.2450641503782975,18.381928582061086) circle (5.5pt);
						\draw[color=uuuuuu] (7.569282903671776,19.25326648153731) node { $(2,1)$};
						\draw [fill=uuuuuu] (2.6500623030928065,16.80786728932808) circle (5.5pt);
						\draw[color=uuuuuu] (1.8,17.2) node { $(2,7)$};
						\draw [fill=uuuuuu] (1.8996168986268076,14.36708458179952) circle (5.5pt);
						\draw[color=uuuuuu] (1.1,14.5) node { $(2,0)$};
						\draw [fill=uuuuuu] (2.7271498925990123,11.951349150436647) circle (5.5pt);
						\draw[color=uuuuuu] (2,11.5) node { $(2,3)$};
						\draw [fill=uuuuuu] (4.816571808124001,10.483389822192684) circle (5.5pt);
						\draw[color=uuuuuu] (4.5,9.8) node { $(2,6)$};
						\draw [fill=uuuuuu] (7.369794490310139,10.523917166354366) circle (5.5pt);
						\draw[color=uuuuuu] (7.7,9.8) node { $(2,9)$};
						\draw [fill=uuuuuu] (9.41157365540949,12.05745111492569) circle (5.5pt);
						\draw[color=uuuuuu] (10.1,11.5) node { $(2,2)$};
						\draw [fill=uuuuuu] (10.16201905987549,14.49823382245425) circle (5.5pt);
						\draw[color=uuuuuu] (11,14.5) node { $(2,5)$};
						\draw [fill=uuuuuu] (9.334486065903286,16.913969253817122) circle (5.5pt);
						\draw[color=uuuuuu] (10,17.3) node { $(2,8)$};
					\end{scriptsize}
				\end{tikzpicture}
				\caption{\rm 
				A regular nonbipartite NDB graph $\G$ that is not SDB.
				}
				\label{fig:tricirc}
			\end{center}
}\end{figure}
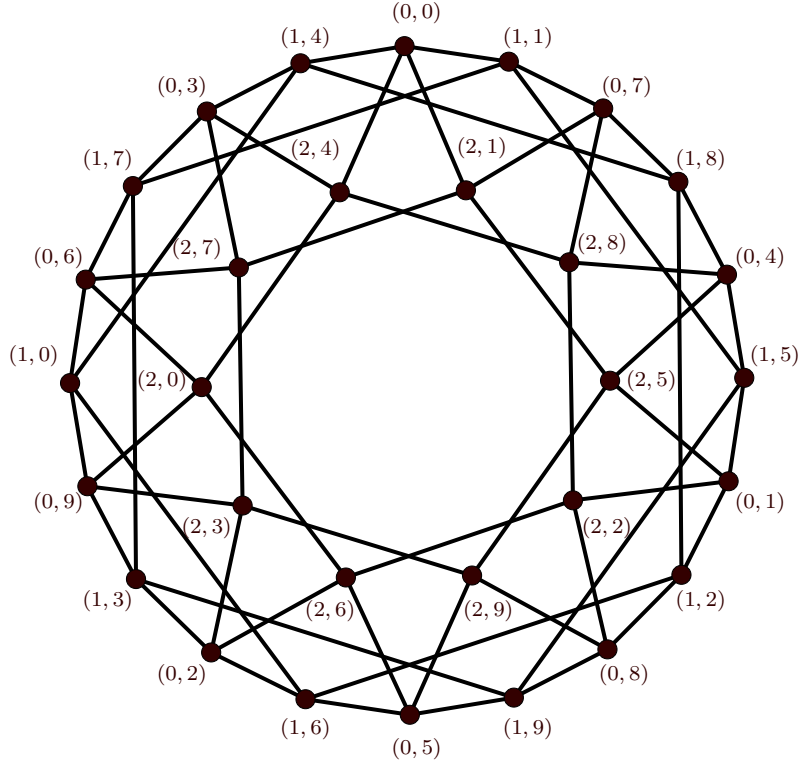}


Keeping in mind the graph $\G$ defined in Definition \ref{def:tricirc}, we now consider certain maps on $V(\G)$. Let $\rho, \ \tau$ and $\varphi$ the functions such that for every $j \in \mathbb{Z}_{10}$, 
\begin{eqnarray}
	\rho(0,j)&=&(0,j+7), \hspace{0.5cm} \rho(1,j)=(1,j+7), \hspace{0.5cm} \rho(2,j)=(2,j+7), \nonumber \\ 
	\tau(0,j)&=&(0,7-j), \hspace{0.5cm} \tau(1,j)=(1,2-j), \hspace{0.5cm} \tau(2,j)=(2,2-j), \nonumber \\ 
	\varphi(0,j)&=&(0,j), \hspace{1.18cm} \varphi(1,j)=(2,j), \hspace{1.18cm} \varphi(2,j)=(1,j), \nonumber 
\end{eqnarray}
with all the computations in the second component performed modulo $10$. 
It is easy to see that these maps are automorphisms of $\G$. Moreover, we observe $\rho$ is a rotation, $\tau$ is a reflection and $\varphi$ swaps vertices with $1$ and $2$ as first coordinate and fixes all the others. 

\begin{proposition}\label{ex:reg}
	Let the graph $\G$ be as defined in Definition \ref{def:tricirc}.
	Then,  $\Gamma$ is a regular nonbipartite NDB graph that is not SDB. 
\end{proposition}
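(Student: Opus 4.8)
The plan is to verify the four assertions — that $\G$ is $4$-regular, non-bipartite, NDB, and not SDB — in turn, using the automorphisms $\rho,\tau,\varphi$ to keep the case analysis short. First I would record the basic combinatorics of $\G$. Counting neighbours directly from Definition~\ref{def:tricirc}: $(0,j)$ is adjacent exactly to $(1,j+1),(1,j+4),(2,j+1),(2,j+4)$, while $(1,j)$ is adjacent exactly to $(0,j-1),(0,j-4),(1,j+4),(1,j-4)$ and symmetrically for $(2,j)$; hence $\G$ is $4$-regular on $30$ vertices. Non-bipartiteness is witnessed by the $5$-cycle $(1,0),(1,4),(1,8),(1,2),(1,6)$ coming from the relation $(1,j)\sim(1,j+4)$ (since $\gcd(4,10)=2$, the vertices with first coordinate $1$ and even second coordinate induce a $5$-cycle). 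Next, since $\gcd(7,10)=1$ the rotation $\rho$ is transitive on each layer $\{i\}\times\mathbb Z_{10}$, and $\varphi$ swaps the layers $i=1$ and $i=2$; hence $\langle\rho,\varphi\rangle$ has at most two vertex orbits, namely $O=\{0\}\times\mathbb Z_{10}$ and $O'=\{1,2\}\times\mathbb Z_{10}$. A vertex of $O$ has no neighbour in its own layer whereas a vertex of $O'$ has two, so $O\ne O'$; thus $O,O'$ are precisely the two vertex orbits, and consequently there are exactly two edge orbits: $E_1$, the edges between $O$ and $O'$, and $E_2$, the edges inside layers $1$ and $2$ (using that $\tau$ and $\varphi$ interchange the four families of $E_1$-edges, and that $\langle\rho,\varphi\rangle$ is transitive on $E_2$).

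To see that $\G$ is not SDB I would run a breadth-first search from one vertex of each orbit. From $(0,0)$ the distance distribution $(|S_i((0,0))|)_{i=0}^{4}$ comes out $(1,4,10,12,3)$, and from the adjacent vertex $(1,1)$ it comes out $(1,4,11,10,4)$ (this last distribution agrees, as it must, with the one computed from $(1,0)$, since $\rho^{3}$ maps $(1,0)$ to $(1,1)$). In particular $\G$ has diameter $4$, the edge $(0,0)(1,1)$ joins the two orbits, and $|S_2((0,0))|=10\ne 11=|S_2((1,1))|$; so by Lemma~\ref{lem:edges strongly distance-balanced} the edge $(0,0)(1,1)$ is not strongly distance-balanced, and therefore neither is $\G$.

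To see that $\G$ is NDB with $\gamma=12$ I would treat the two edge orbits separately. For $E_2$: the automorphism $\rho^{6}\tau$ (acting as $k\mapsto 4-k$ on the second coordinate of layer $1$) interchanges the endpoints of the edge $(1,0)(1,4)$, so that edge is balanced, and conjugating $\rho^{6}\tau$ by automorphisms carrying $(1,0)(1,4)$ onto an arbitrary $E_2$-edge shows every edge of $E_2$ is balanced. For $E_1$: from the two distance distributions above one reads off $W_{(0,0),(1,1)}$ and $W_{(1,1),(0,0)}$ and finds $|W_{(0,0),(1,1)}|=|W_{(1,1),(0,0)}|=12$ (with $6$ vertices equidistant from the two endpoints). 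Since $E_1$ is a single orbit and any automorphism sending an $E_1$-edge to an $E_1$-edge must send the $O$-endpoint to an $O$-endpoint, it follows that every edge of $E_1$ is balanced with parameter $12$. Finally, computing $W_{(1,0),(1,4)}$ from the distance distributions of $(1,0)$ and of $(1,4)=\rho^{6}\tau(1,0)$ gives $|W_{(1,0),(1,4)}|=12$ as well, so $\gamma=12$ holds uniformly over all edges. Hence $\G$ is a regular non-bipartite NDB graph that is not SDB.

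The only real work is the bookkeeping in the four breadth-first searches and the two $W$-counts — routine but error-prone, since one must correctly sort all thirty vertices into distance classes. The automorphisms are exactly what keep this bounded: $\rho$ (a full-length rotation) cuts the vertex set down to two orbits, $\varphi$ identifies the roles of layers $1$ and $2$, and the endpoint-reversing automorphism $\rho^{6}\tau$ disposes of all of $E_2$ at one stroke, so that in the end only two explicit balance computations (for the edges $(0,0)(1,1)$ and $(1,0)(1,4)$) need to be carried out by hand.
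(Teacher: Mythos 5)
Your proof is correct and follows essentially the same strategy as the paper's: use the automorphisms $\rho,\tau,\varphi$ to reduce the NDB check to one representative edge from each of the two edge orbits, verify $|W_{u,v}|=|W_{v,u}|=12$ there by explicit computation, and exhibit one edge witnessing the failure of the SDB condition. Your minor variations --- certifying non-bipartiteness with the explicit $5$-cycle on $\{1\}\times\{0,2,4,6,8\}$, detecting the SDB failure via $|S_2((0,0))|=10\neq 11=|S_2((1,1))|$ together with Lemma~\ref{lem:edges strongly distance-balanced} rather than comparing $|D^2_3|$ with $|D^3_2|$ directly, and using the endpoint-swapping automorphism $\rho^6\tau$ to dispose of the in-layer edges --- are all valid and agree numerically with the paper's computations.
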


\begin{proof}
	Let the graph $\G$ be as defined in Definition \ref{def:tricirc}. See also Figure \ref{fig:tricirc}. Notice that $\G$ has diameter $4$.  By construction we observe every vertex in $\G$ has valency $4$ and that $\G$ has odd cycles. Therefore, $\G$ is a regular nonbipartite graph. Let Aut$(\G)$ denote the automorphism group of $\G$. For $\alpha \in$ Aut$(\G)$ and every pair of adjacent  vertices $u,v \in V(\G)$ we have $\alpha\left( W_{u,v}\right)=W_{\alpha(u), \alpha(v)} $ and since $\alpha$ is a bijection, $|W_{\alpha(u), \alpha(v)}|=|W_{u,v}|$. Pick now the edge $(0,0)(1,1)$ and note the following hold: 
	\begin{eqnarray}
		W_{(0,0),(1,1)}&=&\{(0,0), (2,1), (2,4), (1,4), (1,0), (2,0), (2,5), (2,7), (1,6), (2,3), (2,9), (2,6)\} , \nonumber \\
		W_{(1,1),(0,0)}&=&\{(1,1), (1,7), (1,5), (0,7), (1,3), (0,4), (1,9), (0,6), (0,1), (0,2),  (0,5), (0,8) \}. \nonumber 
	\end{eqnarray}
	Then, the edge 	$(0,0)(1,1)$  is balanced and $|W_{(0,0),(1,1)}|=|W_{(1,1),(0,0)}|=12$. Furthermore, the automorphism $\tau$ maps the edge $(0,0)(1,1)$ to the edge $(0,7)(1,1)$ and so, $(0,7)(1,1)$ is balanced and $|W_{(0,7),(1,1)}|=12$. Considering $\varphi \in $ Aut$(\G)$ we also observe the edges $(0,0)(1,1)$ and $(0,7)(1,1)$ are respectively mapped to the edges $(0,0)(2,1)$ and $(0,7)(2,1)$ which shows the edges $(0,0)(2,1)$ and $(0,7)(2,1)$ are balanced and $|W_{(0,0),(2,1)}|=|W_{(0,7),(2,1)}|=12$. Therefore, since $\rho$ is an automorphism of $\G$, it follows from the above comments that all the edges $(0,j)(1,j+1)$, $(0,j)(1,j+4)$, $(0,j)(2,j+1)$, $(0,j)(2,j+4)$ are all balanced and 
	$$|W_{(0,j),(1,j+1)}|=|W_{(0,j),(1,j+4)}|=|W_{(0,j),(2,j+1)}|=|W_{(0,j),(2,j+4)}|=12$$ 
	for every $j \in \mathbb{Z}_{10}$. 	Pick now the edge $(1,1)(1,5)$ and note 
	\begin{eqnarray}
		W_{(1,1),(1,5)}&=&\{(1,1), (1,7), (0,0), (0,7), (0,3), (2,4), (2,1), (1,4), (0,6), (1,0),  (2,0), (2,7) \} ,\nonumber \\
		W_{(1,5),(1,1)}&=&\{(1,5), (0,4), (1,9), (0,1), (2,2), (1,2), (0,5), (2,5), (0,8), (1,6), (2,9), (2,6)\}, \nonumber
	\end{eqnarray}
	which shows  this edge is balanced and $|W_{(1,1),(1,5)}|=|W_{(1,5),(1,1)}|=12$. Since $\rho \in $ Aut$(\G)$, it is easy to see there exists an automorphism of $\G$ that maps the edge $(1,1)(1,5)$ to the edge $(1,j)(1,j+4)$ and as  $\varphi \in$ Aut$(\G)$ swaps vertices with $1$ and $2$ as first coordinate and fixes all the others, that there exists an automorphism of $\G$ that maps the edge $(1,1)(1,5)$ to the edge $(2,j)(2,j+4)$. We thus have the edges $(1,j)(1,j+4)$ and $(2,j)(2,j+4)$ are all balanced and $|W_{(1,j),(1,j+4)}|=|W_{(2,j),(2,j+4)}|=12$. 
	Hence, $\G$ is NDB with $\gamma=12$. We also notice 
	\begin{eqnarray}
		D^2_3((1,1),(0,0))&=&\{(1,3), (0,4), (1,9), (0,6), (0,1)\}, \nonumber \\
		D^3_2((1,1),(0,0))&=&\{(1,0), (2,0), (2,5), (2,7)\}. \nonumber	
	\end{eqnarray}
	This yields $\G$ is not SDB. The result follows. 
\end{proof}

\bigskip 

The graph given in Definition~\ref{def:tricirc} can be used to construct an infinite family of regular nonbipartite NDB graphs which are not SDB.

\begin{corollary}\label{regular}
	There exists infinitely many regular nonbipartite NDB graphs which are not SDB.
\end{corollary}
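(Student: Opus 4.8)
The plan is to apply Proposition~\ref{cart} to the graph $\G$ of Definition~\ref{def:tricirc}, taking iterated Cartesian powers. By Proposition~\ref{ex:reg}, $\G$ is a $4$-regular nonbipartite NDB graph (with $\gamma_\G=12$) that is not SDB, so at least one example of the required type exists; the point of the corollary is to promote this single example to an infinite family of regular ones, and the Cartesian product is the tool for that.

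First I would record the elementary fact that the Cartesian product preserves regularity: if $G$ is $k$-regular and $H$ is $\ell$-regular, then a vertex $(g,h)$ of $G\square H$ has exactly $k+\ell$ neighbours, namely the $k$ vertices of the form $(g',h)$ with $g'\sim g$ and the $\ell$ vertices of the form $(g,h')$ with $h'\sim h$; hence $G\square H$ is $(k+\ell)$-regular. By a straightforward induction, the Cartesian product of $n$ copies of a $k$-regular graph is $nk$-regular.

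Next I would apply Proposition~\ref{cart} with $H$ taken to be $\G$ itself, or more generally an iterated Cartesian power of $\G$. Since in this setting $\gamma_G=\gamma_H$ and $|V(G)|=|V(H)|$, the hypothesis $|V(H)|\cdot\gamma_G=|V(G)|\cdot\gamma_H$ holds automatically, so Proposition~\ref{cart} yields that the $n$-fold Cartesian power of $\G$ is a nonbipartite NDB graph with $\gamma=|V(\G)|^{n-1}\cdot\gamma_\G=30^{n-1}\cdot 12$ that is not SDB. Combined with the previous paragraph, this $n$-fold power is $4n$-regular, hence regular.

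Finally, to conclude that the family is infinite it suffices to observe that the $n$-fold Cartesian power of $\G$ has $30^n$ vertices, so these graphs are pairwise non-isomorphic as $n$ ranges over the positive integers. I do not expect any genuine obstacle here: all the substantive work is already contained in Proposition~\ref{ex:reg} and in Lemmas~\ref{cartGH}, \ref{SDBGH} and~\ref{bipartiteGH} underlying Proposition~\ref{cart}; the only things to check are the bookkeeping that regularity survives the product and that the resulting orders $30^n$ are genuinely distinct, both of which are immediate.
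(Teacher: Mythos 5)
Your proposal is correct and follows exactly the paper's own route: the paper proves this corollary by taking the $n$-fold Cartesian powers of the graph $\G$ from Definition~\ref{def:tricirc} and invoking Propositions~\ref{cart} and~\ref{ex:reg}. Your additional bookkeeping (that the Cartesian product preserves regularity, and that the orders $30^n$ distinguish the graphs) is left implicit in the paper but is a welcome, and entirely routine, completion of the argument.
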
 

\begin{proof}
	Let the graph $\G$ be as defined in Definition \ref{def:tricirc} and consider the { Cartesian} product of $n$ copies of $\G$. The result now is a straightforward consequence of Propositions \ref{cart} and \ref{ex:reg}.  
\end{proof}

\bigskip

Corollary \ref{regular} provides an infinite family of nonbipartite regular NDB graphs which are not SDB. We next give a construction of a nonregular infinite family. 

\begin{definition}\label{main:defk}
	Let $k\geq 3$ be an integer. Let $\G^{(k)}$ denote the graph of order $12k+6$ with vertex set $V_k=\left\lbrace x_i \ | \ i \in \mathbb{Z}_{8k+4}\right\rbrace \cup  \left\lbrace y_i \ | \ i \in \mathbb{Z}_{4k+2}\right\rbrace$ where $x_i \sim x_{i+1}$ and $y_i \sim x_{i+m}$ with $m\in \{0,2k-1,2k+1,4k+2,6k+1,6k+3\}$. All the computations in the index of $x_j$ are performed modulo $8k+4$ while all the computations in the index of $y_j$ are performed modulo $4k+2$. 
\end{definition}

Throughout this section we will need the following notation. 

\begin{notation}
	With reference to Definition \ref{main:defk}, {for an integer $k\geq 3$}, any subset $X \subseteq V_k$ will be { identified with a pair of sets $(A, B)$ where $A$ is the set of indexes of $x_i$ vertices that belong to $X$, while $B$ is the set of indexes of $y_i$ vertices that belong to $X$, that is $A=\{i \in \mathbb{Z}_{8k+4} \ | \  x_i \in X\}$ and $B=\{i \in \mathbb{Z}_{4k+2} \ | \  y_i \in X\}$}. Let $\ell \in \{4k+2, 8k+4\}$ and let $H\subseteq \mathbb{Z}_{\ell}$. For any integer $j$, we denote $j+H=\{j+h \ | \ h\in H\}$ where the computations are performed modulo $\ell$. Moreover, for $h\in H$ we denote $\left\langle h\right\rangle =\{nh: n\in \mathbb{Z}\}$ and  $\left\langle h\right\rangle^* =\left\langle h\right\rangle\setminus \{0\}$.
\end{notation}

The following results will be very useful in the rest of the paper. 

\begin{lemma}\label{lemma:neighbours}
	For an integer $k\geq 3$, let the graph $\G^{(k)}$ be as defined in Definition \ref{main:defk}. Let $K=\{0, 2k+1, 2k+3\}$ and $M=\{0,2k-1,2k+1,4k+2,6k+1,6k+3\}$. The following holds:  
	\begin{enumerate}[label=(\roman*)]
		\item $S_0(x_j)=\left( \left\lbrace j \right\rbrace, \emptyset \right)  $ and $S_1(x_j)=\left( \left\lbrace {j\pm1} \right\rbrace, j+K\right)  $ for $x_j \in V_k$. {In particular}, $|S_0(x_j)|=1$ and $|S_1(x_j)|=5$.
		\item $S_0(y_j)=\left( \emptyset, \left\lbrace j \right\rbrace \right)  $ and $S_1(y_j)=\left( j+M, \emptyset\right)  $ for $y_j \in V_k$. {In particular}, $|S_0(y_j)|=1$ and $|S_1(y_j)|=6$. 
		\end{enumerate}
	\end{lemma}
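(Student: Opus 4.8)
The plan is to compute the distance-$1$ spheres of each vertex type directly from the adjacency rules in Definition~\ref{main:defk}, keeping careful track of all computations modulo $8k+4$ (for $x$-indices) and modulo $4k+2$ (for $y$-indices), and using the hypothesis $k\geq 3$ to guarantee that the various indices listed are pairwise distinct.

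First I would handle part (ii), since the neighbourhood of a $y$-vertex is read off immediately from the defining relation $y_i\sim x_{i+m}$ with $m\in M=\{0,2k-1,2k+1,4k+2,6k+1,6k+3\}$: by definition no two $y$-vertices are adjacent and $y_j$ has no neighbour among the $y$'s, so $S_1(y_j)=(j+M,\emptyset)$, and I only need to check that the six values $j+m$, $m\in M$, are pairwise distinct in $\mathbb{Z}_{8k+4}$. This reduces to checking that the six elements of $M$ are pairwise distinct mod $8k+4$, i.e.\ that no two differences of elements of $M$ vanish mod $8k+4$; the differences are $\pm 2$, $\pm 2k\mp 1$-type quantities and $\pm(4k+2)$, $\pm(6k+1)$, $\pm(6k+3)$, $\pm(2k+3)$, $\pm(4k+1)$, $\pm(4k+3)$, and for $k\geq 3$ all of these have absolute value strictly between $0$ and $8k+4$, so none is $\equiv 0$. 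Hence $|S_1(y_j)|=6$, and $S_0(y_j)=(\emptyset,\{j\})$ with $|S_0(y_j)|=1$ is immediate.

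Next I would do part (i). The $x$-vertex $x_j$ is adjacent to $x_{j-1}$ and $x_{j+1}$ along the ``cycle'' edges, and to each $y_i$ with $j\equiv i+m\pmod{?}$ for some $m\in M$; care is needed because the $x$-index lives mod $8k+4$ while the $y$-index lives mod $4k+2$, so $x_j\sim y_i$ precisely when $i\in\mathbb{Z}_{4k+2}$ satisfies $i+m\equiv j\pmod{8k+4}$ for some $m\in M$. Solving $i\equiv j-m\pmod{8k+4}$ and then reducing mod $4k+2$: the values $j-m$ for $m\in M$ are $j,\ j-2k+1,\ j-2k-1,\ j-4k-2,\ j-6k-1,\ j-6k-3$; reducing these mod $4k+2$ gives $j,\ j-2k+1,\ j-2k-1,\ j,\ j-2k-1+(4k+2)-(4k+2)=j-2k+1,\ j-2k+1-2=j-2k-1$ — more carefully, $-4k-2\equiv 0$, $-6k-1\equiv -6k-1+(4k+2)\cdot 2 = 2k+3\equiv 2k+3-(4k+2)=-2k+1$ is wrong sign-wise so I would recompute: $-6k-1+ (4k+2)\cdot 1 = -2k+1$, and $-6k-3+(4k+2)=-2k-1$. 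So the distinct $y$-indices adjacent to $x_j$ are $\{j,\ j-2k+1,\ j-2k-1\}\pmod{4k+2}$, which equals $j+K$ with $K=\{0,\,-2k+1,\,-2k-1\}\equiv\{0,\,2k+3,\,2k+1\}\pmod{4k+2}$, matching the stated $K=\{0,2k+1,2k+3\}$. I then check these three are distinct mod $4k+2$ (differences $\pm 2,\pm(2k+1),\pm(2k+3)$, all nonzero mod $4k+2$ for $k\geq 3$) and that $x_{j\pm 1}$ are distinct from each other and (trivially, being $x$-vertices) from the $y$'s; this gives $|S_1(x_j)|=5$.

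The main obstacle I anticipate is purely bookkeeping: correctly translating between the two moduli when an $x$-vertex meets a $y$-vertex, and making sure no coincidences occur among the listed neighbours for small $k$ — this is exactly why the hypothesis $k\geq 3$ is imposed, and I would explicitly verify the borderline difference inequalities (e.g.\ that $2k+3<4k+2$ and $6k+3<8k+4$ hold, i.e.\ $k>1/2$ and $k>-1/2$, so in fact $k\geq 3$ is more than enough) rather than wave at them. Once the index arithmetic is pinned down, the cardinality claims $|S_0|=1$, $|S_1(x_j)|=5$, $|S_1(y_j)|=6$ follow instantly. No deeper idea is needed; the lemma is a direct unwinding of the definition.
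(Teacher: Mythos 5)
Your proposal is correct and follows essentially the same route as the paper: both simply unwind Definition~\ref{main:defk} to read off the neighbourhoods, the only content being the translation of the adjacency rule $y_i\sim x_{i+m}$, $m\in M$, into $x_j\sim y_{j+m}$, $m\in K$, under the two different moduli. You are in fact more explicit than the paper's two-line proof about why the six values $j-m$ collapse to the three residues $j+K$ modulo $4k+2$ and about the distinctness checks giving $|S_1(x_j)|=5$ and $|S_1(y_j)|=6$, but this is a difference of detail, not of method.
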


\begin{proof}
	Pick $x_j, y_j \in V_k$. It is clear that $S_0(x_j)=\left\lbrace x_j \right\rbrace $ and  $S_0(y_j)=\left\lbrace y_j \right\rbrace $. By Definition \ref{main:defk} we observe $\{x_{j-1}, x_{j+1}\} \subseteq S_1(x_j)$ and $x_j \sim y_{j+m}$ with $m\in \{0,2k+1,2k+3\}$. Similarly, vertex  $y_j \sim x_{j+m}$ with $m\in \{0,2k-1,2k+1,4k+2,6k+1,6k+3\}$. The result follows.
\end{proof}

\begin{lemma}\label{lemma1k}
	For an integer $k\geq 5$, let the graph $\G^{(k)}$ be as defined in Definition \ref{main:defk}.  For $x_j \in V_k$ the following hold:  
	\begin{enumerate}[label=(\roman*)]
		\item $	S_2(x_j) =\left( \pm 2+j+\left\langle 2k+1 \right\rangle \cup j+\left\langle 2k+1 \right\rangle^*, \pm 1+j+K\right) $,
		\item $	S_3(x_j) =\left( \pm 3+j+\left\langle 2k+1 \right\rangle \cup \pm 1+j+\left\langle 2k+1 \right\rangle^*, \pm 3+j+1+\left\langle 2k+1 \right\rangle \cup\{j+2\}\right) $,
		\item $	S_4(x_j) =\left( \pm 4+j+\left\langle 2k+1 \right\rangle, \pm 4+j+1+\left\langle 2k+1 \right\rangle \cup\{j+3\}\right) $ ,
		\item $S_i(x_j) =\left(\pm i+j+\left\langle 2k+1 \right\rangle , \pm i+j+1+\left\langle 2k+1 \right\rangle  \right) $, for every $i \in \{5,\ldots, k\}$, 	
		\item $|S_2(x_j)|=16$, $|S_3(x_j)|=19$, $|S_4(y_j)|=13$ and $|S_i(x_j)|=12$ for every $5\leq i \leq k$. Moreover, the eccentricity of $x_j$ equals $k$.
	\end{enumerate}
\end{lemma}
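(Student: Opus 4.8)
The plan is to determine the full distance partition around a vertex $x_j$ by a breadth-first search, proving parts (i)--(iv) simultaneously by induction on $i$, and then deducing part (v) by a counting argument. Two structural observations drive the whole computation. First, since $8k+4 = 4(2k+1)$, the element $2k+1$ generates the subgroup $\langle 2k+1\rangle = \{0,\,2k+1,\,4k+2,\,6k+3\}$ of order $4$ in $\mathbb{Z}_{8k+4}$, while modulo $4k+2$ it collapses to the order-$2$ subgroup $\{0,\,2k+1\}$; this is why the same symbol $\langle 2k+1\rangle$ legitimately occurs in both the $x$-coordinate (modulo $8k+4$) and the $y$-coordinate (modulo $4k+2$). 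Second, writing $M = M_0 \cup (4k+2 + M_0)$ with $M_0 = \{0,\,2k-1,\,2k+1\}$, one checks that the set of $x$-neighbours of a $y$-vertex $y_b$ equals $b+M$ and is independent of the chosen lift of $b$ from $\mathbb{Z}_{4k+2}$ to $\mathbb{Z}_{8k+4}$; in particular $\rho\colon x_i\mapsto x_{i+1},\ y_i\mapsto y_{i+1}$ is an automorphism of $\G^{(k)}$, so after handling one vertex the rest follow by applying powers of $\rho$. In the write-up I would nonetheless carry $j$ along as a free parameter, since the bookkeeping does not depend on it.

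For the induction, the base cases $i=0,1$ are immediate (the latter is Lemma~\ref{lemma:neighbours}), and $S_2(x_j)$, $S_3(x_j)$ are obtained by a direct, if slightly tedious, enumeration of $N(S_1(x_j))$ and $N(S_2(x_j))$ with the already-placed vertices discarded. The inductive step for larger $i$ rests on the elementary BFS identity $S_i = N(S_{i-1})\setminus\bigl(S_{i-2}\cup S_{i-1}\bigr)$. If $S_{i-1}(x_j)$ is written as $(X_{i-1},Y_{i-1})$ (sets of $x$- and $y$-indices), then on the $x$-side $N(S_{i-1})$ consists of $X_{i-1}\pm 1$ together with $Y_{i-1}+M$, and on the $y$-side of $X_{i-1}+K$, where $K=\{0,\,2k+1,\,2k+3\}$. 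Feeding in the inductive descriptions of $X_{i-1},Y_{i-1},X_{i-2},Y_{i-2}$ as translates of $\langle 2k+1\rangle$, one verifies that $X_{i-1}\pm 1$ produces exactly the new $x$-indices $\pm i+j+\langle 2k+1\rangle$ (the $\pm(i-2)$ part is absorbed into $S_{i-2}$), that $Y_{i-1}+M$ creates no $x$-index outside this set, and that $X_{i-1}+K$ produces exactly the new $y$-indices $\pm i+j+1+\langle 2k+1\rangle$. The ``exceptional'' extra terms visible in (ii) and (iii) (the occurrences of $\langle 2k+1\rangle^*$, and the singletons $\{j+2\}$, $\{j+3\}$) wash out after a couple of steps, so a few low-order values of $i$ have to be checked separately before the uniform formula of part (iv) settles in.

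Finally, for part (v): once the set descriptions are in hand, each $|S_i(x_j)|$ is just a count of cosets in $\mathbb{Z}_{8k+4}$ and $\mathbb{Z}_{4k+2}$, and the standing hypothesis $k\ge 5$ is what guarantees that the relevant cosets $\pm i + j + \langle 2k+1\rangle$ and their small translates remain pairwise disjoint for $i\le k$, so that no unexpected collapsing occurs; this gives $|S_2(x_j)|=16$, $|S_3(x_j)|=19$, $|S_4(x_j)|=13$ and $|S_i(x_j)|=12$ for $5\le i\le k$. Summing yields $\sum_{i=0}^{k}|S_i(x_j)| = 1+5+16+19+13+12(k-4) = 12k+6 = |V_k|$, so every vertex of $\G^{(k)}$ lies within distance $k$ of $x_j$ while $S_k(x_j)\neq\emptyset$, whence $\epsilon(x_j)=k$. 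I expect the main obstacle to be purely organizational rather than conceptual: one must keep the two moduli $4k+2$ and $8k+4$ strictly apart, and patiently check at each inductive step both that $Y_{i-1}+M$ contributes nothing new and that all the claimed level sets are genuinely disjoint --- a sizeable but routine case analysis that the bound $k\ge 5$ is designed to make clean.
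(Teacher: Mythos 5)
Your proposal is correct and follows essentially the same route as the paper: a layer-by-layer BFS computation with direct enumeration for $i\le 4$, an induction for $5\le i\le k$ driven by the coset identities $\langle 2k+1\rangle+K=\langle 2k+1\rangle\cup(2+\langle 2k+1\rangle)$ and $\{0,2k+1\}+M=(-2+\langle 2k+1\rangle)\cup\langle 2k+1\rangle$ (which you phrase via the subgroup structure of $\langle 2k+1\rangle$ and the decomposition $M=M_0\cup(4k+2+M_0)$), and the same counting argument $\sum_{i=0}^{k}|S_i(x_j)|=12k+6=|V_k|$ to pin down the eccentricity. The only cosmetic difference is your observation about the rotation automorphism, which neither you nor the paper actually needs.
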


\begin{proof}
	Pick a vertex $x_j \in V_k$. Assume for a moment
	that $z\in S_i(x_j)$ for some $0 \leq i \leq \epsilon(x_j)$ and let $w$ be a neighbour of $z$. Then, by the triangle inequality, $d(x_j,w) \in \{i-1,i,i+1\}$ and so $w\in S_{i-1}(x_j)\cup S_{i}(x_j) \cup S_{i+1}(x_j)$. {Therefore,} $S_{i+1}(x_j)$ consists of all the neighbours of vertices in $S_{i}(x_j)$ which are not in $S_{i-1}(x_j)$ nor $S_{i}(x_j)$. Now, $(i)$--$(iii)$ immediately {follow} from Lemma \ref{lemma:neighbours} and the above comments after a careful inspection of the neighbours’ sets of vertices in $S_i(x_j)$. We now prove part $(iv)$ by induction. Similarly as above we see that $(iv)$ holds for $i \in \{5,6\}$. Let us now assume that $(iv)$ holds for $i-1$ and $i$, where $i\geq6$. Hence, we have
	\begin{eqnarray}
		S_{i-1}(x_j) &=&\left(\pm (i-1)+j+\left\langle 2k+1 \right\rangle , \pm (i-1)+j+1+\left\langle 2k+1 \right\rangle  \right), \nonumber \\ 
		S_i(x_j) &=&\left(\pm i+j+\left\langle 2k+1 \right\rangle , \pm i+j+1+\left\langle 2k+1 \right\rangle  \right) \label{si}.
	\end{eqnarray}
	Next, we compute the neighbours of the vertices belonging to the set $S_i(x_j)$. By Lemma \ref{lemma:neighbours} and equation \eqref{si}, we get
	\begin{eqnarray}
		S\left( \left( \pm i+j+\left\langle 2k+1 \right\rangle, \emptyset \right) \right)  &=&\left(\pm i \pm 1+j+\left\langle 2k+1 \right\rangle, \pm i+j+\left\langle 2k+1 \right\rangle +K \right),\label{bn4} \\ 
		S\left( \left(  \emptyset, \pm i+j+1+\left\langle 2k+1 \right\rangle \right) \right)  &=&\left(\pm i+j+1+\{0, 2k+1\}+M, \emptyset\right) \label{bn5}, 
	\end{eqnarray}
	where $K$ and $M$ are the sets as defined in Lemma \ref{lemma:neighbours}.  Observe  that
	\begin{equation}
		\left\langle 2k+1 \right\rangle +K =\left\langle 2k+1 \right\rangle \cup (2+\left\langle 2k+1 \right\rangle), \label{bn6}
	\end{equation}
	where the operations are performed modulo $4k+2$. Similarly, we have 
	\begin{equation}
		\{0, 2k+1\}+M =(-2+\left\langle 2k+1 \right\rangle) \cup \left\langle 2k+1 \right\rangle, \label{bn7}
	\end{equation}
	where the operations are performed modulo $8k+4$.  Therefore, from \eqref{bn4}--\eqref{bn7} it turns out the set of all neighbours of the vertices which are in $S_i(x_j)$ is given as follows: 
	\begin{eqnarray}
		S\left( S_i(x_j)\right) 
		&=&	 \left(\pm i \pm 1+j+\left\langle 2k+1 \right\rangle , \pm i\pm 1+j+1+\left\langle 2k+1 \right\rangle  \right). \label{bn8} \nonumber
	\end{eqnarray}
	We thus have 
	\begin{eqnarray*}
	S_{i+1}(x_j)&=& S\left( S_i(x_j)\right)  \setminus \left( S_{i-1}(x_j)\cup S_i(x_j)\right)\\
	&=&  \left(\pm (i + 1)+j+\left\langle 2k+1 \right\rangle , \pm (i+ 1)+j+1+\left\langle 2k+1 \right\rangle  \right) 
	\end{eqnarray*}	
	proving the claim $(iv)$. 
	
	Let us now prove $(v)$. The first part of the statement immediately holds from  $(i)$--$(iv)$ above. To prove the second part, let $\ell$ denote the eccentricity of $x_j$. From Lemma  \ref{lemma:neighbours} and $(i)$--$(iv)$ above, the sets $S_i(x_j) \ (0 \leq i \leq k)$ are nonempty and so, $\ell \geq k$. 
	Observe that $\sum_{i=0}^{k} |S_i(x_j)|=12k+6=|V_k|$.
	Since the collection of all the sets  $S_i(x_j) \ (0 \leq i \leq \ell)$ is a partition of the vertex set it follows that the sets  $S_i(x_j)$ are empty for $i>k$. Then, $\ell\leq k$ and the result follows. 
\end{proof}

\bigskip 
The proof of the next result can be done in a similar way to that of Lemma \ref{lemma1k} above and is therefore omitted and left to the reader.
\bigskip 

\begin{lemma}\label{lemma2k}
	For an integer $k\geq 5$, let the graph $\G^{(k)}$ be as defined in Definition \ref{main:defk}. For $y_j \in V_k$ the following hold:  
	\begin{enumerate}[label=(\roman*)]
		
		\item $	S_2(y_j) =\left( \pm 1+j+\left\langle 2k+1 \right\rangle \cup j+\{2k-2, 6k\}, \pm 2+j+\left\langle 2k+1 \right\rangle \cup j+\{2k+1\}\right) $.
		\item $	S_3(y_j) =\left( \pm 3+j-1+\left\langle 2k+1 \right\rangle \cup -2+j+\left\langle 4k+2 \right\rangle , \pm 3+j+\left\langle 2k+1 \right\rangle \cup \pm 1+j+\left\langle 2k+1\right\rangle \right) $.
		\item $	S_4(y_j) =\left( \pm 4+j-1+\left\langle 2k+1 \right\rangle \cup -3+j+\left\langle 4k+2 \right\rangle , \pm 4+j+\left\langle 2k+1 \right\rangle \right) $.
		\item For every $5\leq i \leq k$, the set $	S_i(y_j) =\left( \pm i+j-1+\left\langle 2k+1 \right\rangle , \pm i+j+\left\langle 2k+1 \right\rangle \right) $.
		\item $|S_2(y_j)|=15$, $|S_3(y_j)|=18$, $|S_4(y_j)|=14$ and $|S_i(y_j)|=12$ for every $5\leq i \leq k$. Moreover, the eccentricity of $y_j$ equals $k$.
	\end{enumerate}
\end{lemma}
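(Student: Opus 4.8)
The plan is to follow the strategy of the proof of Lemma~\ref{lemma1k}. The starting point is the elementary observation, valid in any connected graph, that if $S(X)$ denotes the set of all neighbours of the vertices of a set $X$, then
$$S_{i+1}(y_j)=S\bigl(S_i(y_j)\bigr)\setminus\bigl(S_{i-1}(y_j)\cup S_i(y_j)\bigr)\qquad(i\ge1),$$
since by the triangle inequality $S\bigl(S_i(y_j)\bigr)\subseteq S_{i-1}(y_j)\cup S_i(y_j)\cup S_{i+1}(y_j)$ and every vertex at distance $i+1$ from $y_j$ has a neighbour at distance $i$. By Lemma~\ref{lemma:neighbours}, the set of neighbours of a pair $(A,\emptyset)$ of $x$-vertices is $\bigl((A+1)\cup(A-1),\,A+K\bigr)$ and the set of neighbours of a pair $(\emptyset,B)$ of $y$-vertices is $(B+M,\emptyset)$, so the whole computation reduces to tracking unions of cosets of $\langle2k+1\rangle$ (and the occasional coset of $\langle4k+2\rangle$) inside $\mathbb{Z}_{8k+4}$ and $\mathbb{Z}_{4k+2}$ under the shifts $\pm1$, $+K$ and $+M$.

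First I would record $S_0(y_j)$ and $S_1(y_j)$ from Lemma~\ref{lemma:neighbours}, and then apply the displayed recursion three times to establish the explicit descriptions of $S_2(y_j)$, $S_3(y_j)$ and $S_4(y_j)$ in parts (i)--(iii). The subtlety at this stage is that these low-index spheres still carry ``transient'' pieces -- the extra $x$-indices $j+\{2k-2,6k\}$ in $S_2(y_j)$, and the cosets $-2+j+\langle4k+2\rangle$ and $-3+j+\langle4k+2\rangle$ in $S_3(y_j)$ and $S_4(y_j)$ respectively -- which arise from the long chord $x_i\sim x_{i+4k+2}$ encoded by the element $4k+2\in M$; these pieces must be propagated carefully and, at each step, tested against $S_{i-1}(y_j)\cup S_i(y_j)$ to decide which of their constituents survive.

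For part (iv) I would argue by induction on $i$. Having checked the base cases $i\in\{5,6\}$ directly as above, suppose
$$S_{i-1}(y_j)=\bigl(\pm(i-1)+j-1+\langle2k+1\rangle,\ \pm(i-1)+j+\langle2k+1\rangle\bigr),\quad S_i(y_j)=\bigl(\pm i+j-1+\langle2k+1\rangle,\ \pm i+j+\langle2k+1\rangle\bigr).$$
Applying $S(\cdot)$ through Lemma~\ref{lemma:neighbours} and using the same two identities $\langle2k+1\rangle+K=\langle2k+1\rangle\cup(2+\langle2k+1\rangle)$ in $\mathbb{Z}_{4k+2}$ and $\{0,2k+1\}+M=(-2+\langle2k+1\rangle)\cup\langle2k+1\rangle$ in $\mathbb{Z}_{8k+4}$ that were used in the proof of Lemma~\ref{lemma1k}, a short computation shows that the neighbour contributions of the $x$-part and of the $y$-part of $S_i(y_j)$ combine into $S\bigl(S_i(y_j)\bigr)=S_{i-1}(y_j)\cup P$, where $P=\bigl(\pm(i+1)+j-1+\langle2k+1\rangle,\ \pm(i+1)+j+\langle2k+1\rangle\bigr)$ and $P$ is disjoint from $S_{i-1}(y_j)\cup S_i(y_j)$. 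Hence $S_{i+1}(y_j)=S\bigl(S_i(y_j)\bigr)\setminus\bigl(S_{i-1}(y_j)\cup S_i(y_j)\bigr)=P$, which is exactly the claimed form, completing the induction.

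Finally, part (v) follows by counting cosets in the formulas of (i)--(iv) and verifying that the listed pieces are pairwise disjoint; this is where the hypothesis $k\ge5$ is used. For instance, when $5\le i\le k$ the set $\pm i+\langle2k+1\rangle$ has $8$ elements in $\mathbb{Z}_{8k+4}$ -- because $2i\not\equiv0,2k+1,4k+2,6k+3\pmod{8k+4}$ in this range -- and $4$ elements in $\mathbb{Z}_{4k+2}$, so $|S_i(y_j)|=12$; similarly one gets $|S_2(y_j)|=10+5=15$, $|S_3(y_j)|=10+8=18$ and $|S_4(y_j)|=10+4=14$. The eccentricity claim is then the same double count as in Lemma~\ref{lemma1k}(v): the spheres $S_0(y_j),\dots,S_k(y_j)$ are all nonempty, so $\epsilon(y_j)\ge k$, while $1+6+15+18+14+12(k-4)=12k+6=|V_k|$ shows these spheres already partition $V_k$, forcing $S_i(y_j)=\emptyset$ for $i>k$ and hence $\epsilon(y_j)=k$. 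I expect the main obstacle to be purely organizational -- keeping the modulus $8k+4$ for $x$-indices and $4k+2$ for $y$-indices strictly apart, correctly pushing the long-chord terms through the three transient spheres, and checking the disjointness conditions on which the uniform pattern of part (iv) relies.
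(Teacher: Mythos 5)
Your proposal is correct and follows exactly the route the paper intends: the paper omits this proof, stating only that it ``can be done in a similar way to that of Lemma~\ref{lemma1k}'', and your argument is precisely that adaptation --- the recursion $S_{i+1}(y_j)=S(S_i(y_j))\setminus(S_{i-1}(y_j)\cup S_i(y_j))$, the two coset identities $\langle 2k+1\rangle+K$ and $\{0,2k+1\}+M$, induction for part (iv), and the cardinality/partition count for part (v). Your explicit descriptions and counts ($15$, $18$, $14$, $12$, summing to $12k+6$) agree with the sets listed in the paper's referee appendix, so there is nothing to correct.
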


For an integer $k\geq 5$, let the graph $\G^{(k)}$ be as defined in Definition \ref{main:defk}. We next show that some edges of  $\G^{(k)}$ are balanced.

\begin{lemma}\label{propxj}
	For an integer $k\geq 5$, let the graph $\G^{(k)}$ be as defined in Definition \ref{main:defk}.  For the edge $x_jx_{j+1}$ the following hold:  
	\begin{enumerate}[label=(\roman*)]
		\item $|D^1_0(x_j, x_{j+1})|=|D^0_1(x_j, x_{j+1})|=1$.
		\item $|D^2_1(x_j, x_{j+1})|=|D^1_2(x_j, x_{j+1})|=4$.
		\item $|D^3_2(x_j, x_{j+1})|=|D^2_3(x_j, x_{j+1})|=12$.
		\item $|D^4_3(x_j, x_{j+1})|=|D^3_4(x_j, x_{j+1})|=7$.	
		\item $|D^{\ell+1}_{\ell}(x_j, x_{j+1})|=|D^{\ell}_{\ell+1}(x_j, x_{j+1})|=6$ for all $4\leq \ell \leq k-1$.
		\item $|D^k_k(x_j, x_{j+1})|=6$. 
		\item The edge $x_jx_{j+1}$ is balanced and the sets $D^i_i(x_j, x_{j+1}) \ (1\leq i \leq k-1)$ are all empty.
	\end{enumerate}
\end{lemma}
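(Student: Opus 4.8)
The plan is to first dispose of all the ``$|D^i_{i-1}|=|D^{i-1}_i|$'' equalities in (i)--(vi) at one stroke, and then reduce everything that remains to the single structural fact that $D^i_i(x_j,x_{j+1})$ is empty for $1\le i\le k-1$. Since $x_j$ and $x_{j+1}$ are both vertices of the form $x_\bullet$, Lemma~\ref{lemma1k}(v) together with Lemma~\ref{lemma:neighbours} gives $|S_i(x_j)|=|S_i(x_{j+1})|$ for every $i\in\{1,\dots,k\}$ (this cardinality depends on $i$ only), and Lemmas~\ref{lemma1k}(v) and \ref{lemma2k}(v) show every vertex of $\G^{(k)}$ has eccentricity $k$, so $\G^{(k)}$ has diameter $k$. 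Hence Lemma~\ref{lem:edges strongly distance-balanced} applies and the edge $x_jx_{j+1}$ is strongly distance-balanced; in particular $|D^i_{i-1}(x_j,x_{j+1})|=|D^{i-1}_i(x_j,x_{j+1})|$ for every $i\ge 1$, so $x_jx_{j+1}$ is balanced and the equality halves of (i)--(vi) hold.

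Writing $a_i=|D^i_{i-1}(x_j,x_{j+1})|=|D^{i-1}_i(x_j,x_{j+1})|$ and using the disjoint decomposition $S_i(x_j)=D^i_{i+1}\sqcup D^i_i\sqcup D^i_{i-1}$ (suppressing the fixed arguments $(x_j,x_{j+1})$) together with $|D^i_{i+1}|=|D^{i+1}_i|=a_{i+1}$, one obtains the recursion $|S_i(x_j)|=a_{i+1}+|D^i_i|+a_i$. Now $a_1=1$ trivially (as $D^1_0=\{x_{j+1}\}$ and $D^0_1=\{x_j\}$), and $S_{k+1}(x_j)=\emptyset$ forces $a_{k+1}=|D^k_{k+1}|=0$; so once we know $|D^i_i|=0$ for $1\le i\le k-1$, the recursion unwinds, using the values of $|S_i(x_j)|$ from Lemma~\ref{lemma1k}(v), to $a_2=4$, $a_3=12$, $a_4=7$, $a_5=\dots=a_k=6$ and finally $|D^k_k|=|S_k(x_j)|-a_k=6$. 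This is exactly (i)--(vi), and (vii) as well.

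Thus the whole proof reduces to showing $D^i_i(x_j,x_{j+1})=S_i(x_j)\cap S_i(x_{j+1})=\emptyset$ for $1\le i\le k-1$. For this I would use the explicit descriptions of $S_i(x_j)$ from Lemmas~\ref{lemma:neighbours} and \ref{lemma1k}: each $S_i(x_\bullet)$ is a pair (set of $x$-indices, set of $y$-indices), and $S_i(x_{j+1})$ is obtained from $S_i(x_j)$ by replacing $j$ with $j+1$. Comparing the $x$-parts and the $y$-parts separately, the intersection is a union of intersections of translates of the cyclic subgroups $\langle 2k+1\rangle=\{0,2k+1,4k+2,6k+3\}\le\mathbb{Z}_{8k+4}$ and $\langle 2k+1\rangle=\{0,2k+1\}\le\mathbb{Z}_{4k+2}$; a translate-intersection is empty precisely when a short list of offsets — all of the form $\pm 1$, $\pm(2i\pm 1)$, and, for $i\le 4$, a few exceptional values such as the constants $2k-2$, $6k$, $\pm 2$, $\pm 3$ occurring in parts (i)--(iii) of Lemma~\ref{lemma1k} — fails to lie in $\langle 2k+1\rangle$. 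For $i\le k-1$ and $k\ge 5$ all these offsets have absolute value strictly between $0$ and $2k+1$, or the wrong parity to be the even element $4k+2$, hence none is a nonzero member of $\langle 2k+1\rangle$, so the intersections are empty; note that at $i=k$ the offset $-2i-1=-2k-1$ does reduce into $\langle 2k+1\rangle$ (modulo $8k+4$ and modulo $4k+2$), which is exactly why $|D^k_k|=6\neq 0$ and is consistent with (vi). Conceptually, the vanishing of $D^i_i$ for $i<k$ simply reflects that the odd girth of $\G^{(k)}$ is $2k+1$: a vertex of $D^i_i(x_j,x_{j+1})$ together with two geodesics to $x_j$ and $x_{j+1}$ and the edge $x_jx_{j+1}$ would give a closed walk, hence an odd cycle, of length at most $2i+1<2k+1$.

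I expect the main obstacle to be precisely this small-$i$ bookkeeping: for $i\in\{2,3,4\}$ the sets $S_i(x_j)$ in Lemma~\ref{lemma1k}(i)--(iii) are not clean cosets of $\langle 2k+1\rangle$ but cosets together with a handful of extra indices, so emptiness of $D^i_i$ must be checked term by term against each of the (up to two) coset pieces in both the $x$- and $y$-components, and one has to verify carefully that the various additive constants do not become multiples of $2k+1$ in $\mathbb{Z}_{8k+4}$ or $\mathbb{Z}_{4k+2}$; this is where the hypothesis $k\ge 5$ is genuinely used. The range $5\le i\le k-1$ is uniform and routine once the pattern of Lemma~\ref{lemma1k}(iv) is in hand.
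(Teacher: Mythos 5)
Your proposal is correct, but it is organized quite differently from the paper's proof, so a comparison is worthwhile. The paper obtains (i)--(vi) by directly computing each off-diagonal set $D^{i}_{i-1}(x_j,x_{j+1})$ and $D^{i-1}_{i}(x_j,x_{j+1})$ as an explicit intersection of the sets from Lemma~\ref{lemma1k} (the lists are written out in the authors' referee appendix), then gets balancedness by summing, and finally obtains the emptiness of the diagonal sets in (vii) purely by counting: $\sum_{i=1}^{k-1}|D^i_i(x_j,x_{j+1})| = |V_k| - 2|W_{x_j,x_{j+1}}| - |D^k_k(x_j,x_{j+1})| = (12k+6)-12k-6=0$, so no diagonal intersection is ever computed. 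You invert this: since $|S_i(x_j)|$ depends only on $i$ by Lemma~\ref{lemma1k}(v), Lemma~\ref{lem:edges strongly distance-balanced} makes $x_jx_{j+1}$ a strongly distance-balanced edge --- which already yields every equality in (i)--(vi) together with balancedness, and is in fact a stronger structural fact than the lemma asserts --- and the numerical values then follow from the telescoping recursion $|S_i(x_j)|=a_{i+1}+|D^i_i|+a_i$ (which I checked gives $4,12,7,6,\dots,6$ and $|D^k_k|=6$), \emph{provided} one proves $D^i_i(x_j,x_{j+1})=\emptyset$ for $1\le i\le k-1$ directly. That emptiness check is exactly the computation the paper's counting trick avoids, and it is where your remaining work sits: the generic range $5\le i\le k-1$ reduces to showing that the offsets $\pm1$ and $\pm(2i-1)$ avoid $\langle 2k+1\rangle$ (this is correct, and at $i=k$ the offset $-2k-1$ does land in $\langle 2k+1\rangle$, consistent with $|D^k_k|=6$), while $i\in\{1,2,3,4\}$ needs the term-by-term comparison you describe; those cases do go through for $k\ge5$, so there is no gap, only deferred routine checking comparable in volume to what the paper defers to its appendix. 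In short, your route trades the explicit construction of roughly $2k$ off-diagonal sets for one application of the SDB-edge lemma plus $k-1$ emptiness checks, which is arguably more economical; the odd-girth heuristic you mention is a pleasant explanation of \emph{why} the diagonals vanish, but as you note it would itself require proof and cannot replace the bookkeeping.
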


\begin{proof}
	Pick $j \in \mathbb{Z}_{8k+4} $ and consider the edge $x_jx_{j+1}$.  By Lemma \ref{lemma1k} and Lemma \ref{lemma2k} we first observe that $\G^{(k)}$ has diameter $k$. Now, $(i)$--$(vi)$ immediately follows from Lemma \ref{lemma1k}. Let us now prove $(vii)$. From $(i)$--$(v)$ above, we notice 
	\begin{equation}
		|W_{x_j, x_{j+1}}|=	\sum_{i=0}^{k-1}|D^{i+1}_{i}(x_j, x_{j+1})|=6k=\sum_{i=0}^{k-1}|D^{i}_{i+1}(x_j, x_{j+1})|=|W_{ x_{j+1}, x_j}|. \nonumber 
	\end{equation}
	Hence, the edge $x_jx_{j+1}$ is balanced. Moreover, by $(vi)$ above we also notice $$\sum_{i=1}^{k-1}|D^{i}_{i}(x_j, x_{j+1})|=|V_k|-2|W_{x_j, x_{j+1}}|-|D^{k}_{k}(x_j, x_{j+1})|=0.$$ The result follows. 
\end{proof}

\bigskip 

The proof of the next result {is} omitted as { it} can be carried out using the same arguments as the proof of Lemma \ref{propxj}.

\begin{lemma}\label{propxjyell}
For an integer $k\geq 5$, let the graph $\G^{(k)}$ be as defined in Definition \ref{main:defk} and let $K=\{0,2k+1,2k+3\}$. For every $\ell\in K$ and for every edge $x_jy_{\ell}$ the following hold:  
\begin{enumerate}[label={\it (\roman*)}]
\item $|D^1_0(x_j, y_{\ell})|=|D^0_1(x_j, y_{\ell})|=1.$
\item $|D^2_1(x_j, y_{\ell})|=5$ and $|D^1_2(x_j, y_{\ell})|=4$.
\item $|D^3_2(x_j, y_{\ell})|=|D^2_3(x_j, y_{\ell})|=11$.
\item $|D^4_3(x_j, y_{\ell})|=7$ and $|D^3_4(x_j, y_{\ell})|=8$.
\item $|D^{i+1}_{i}(x_j, y_{\ell})|=|D^{i}_{i+1}(x_j, y_{\ell})|=6$ for all $4\leq i \leq k-1$.
\item $|D^k_k(x_j, y_{\ell})|=6$.
\item The edge $x_j y_{\ell}$ is balanced and the sets $D^i_i(x_j,  y_{\ell}) \ (1\leq i \leq k-1)$ are all empty.
\end{enumerate}
\end{lemma}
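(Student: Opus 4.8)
The plan is to follow the proof of Lemma~\ref{propxj} essentially line by line; the only real difference is that the edge $x_jy_\ell$ is not ``symmetric'' — its endpoints have different valencies and different local structure — so the two sides of each partition below will not match term by term. Note first that $x_j\sim y_\ell$ forces $\ell-j\in K$ (Lemma~\ref{lemma:neighbours}), so there are genuinely three edges to consider, one for each value of the offset $\ell-j\in\{0,2k+1,2k+3\}$. I would cut the number of cases down as follows. The map $\rho_t\colon x_i\mapsto x_{i+t},\ y_i\mapsto y_{i+t}$, with the indices reduced modulo $8k+4$ and $4k+2$ respectively, is an automorphism of $\G^{(k)}$: it clearly preserves the path $x_i\sim x_{i+1}$, and it preserves the adjacencies $y_i\sim x_{i+m}$, $m\in M$, because $M+(4k+2)=M$, which makes the mismatch between the two moduli harmless. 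Applying a suitable $\rho_t$ shows that the numbers $|D^i_{i'}(x_j,y_\ell)|$ depend only on the offset $\ell-j$, so it suffices to take $j=0$ and $\ell\in K$; and if two of the three resulting edges turn out to be swapped by an automorphism of $\G^{(k)}$ (this should be checked first), only one of them needs separate treatment.

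For a fixed such edge I would compute $D^i_{i'}(x_j,y_\ell)=S_i(x_j)\cap S_{i'}(y_\ell)$ directly from the explicit sphere descriptions: $S_0,S_1$ from Lemma~\ref{lemma:neighbours}, $S_i(x_j)$ from Lemma~\ref{lemma1k}, $S_i(y_\ell)$ from Lemma~\ref{lemma2k}, each realised as a pair of index sets in $\mathbb{Z}_{8k+4}$ and $\mathbb{Z}_{4k+2}$. Since $x_j\sim y_\ell$, the triangle inequality gives $|d(x_j,z)-d(y_\ell,z)|\le1$ for every vertex $z$, so
\[
S_i(x_j)=D^i_{i+1}(x_j,y_\ell)\cup D^i_i(x_j,y_\ell)\cup D^i_{i-1}(x_j,y_\ell)\qquad\text{(disjoint union),}
\]
and symmetrically $S_i(y_\ell)=D^{i-1}_i(x_j,y_\ell)\cup D^i_i(x_j,y_\ell)\cup D^{i+1}_i(x_j,y_\ell)$. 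The feature that keeps these intersections manageable is that in the ``stable range'' $5\le i\le k$ both $S_i(x_j)$ and $S_i(y_\ell)$ are unions of cosets of $\langle 2k+1\rangle$, a subgroup of order $4$ in $\mathbb{Z}_{8k+4}$ and of order $2$ in $\mathbb{Z}_{4k+2}$ whose quotients are both isomorphic to $\mathbb{Z}_{2k+1}$; whether two of the relevant cosets meet is then decided by a single congruence modulo $2k+1$ into which the offset $\ell-j$ enters only through its residue ($0$ or $2$). This yields directly that $D^i_i(x_j,y_\ell)=\emptyset$ for the relevant indices (the congruence $2i\equiv\pm1\pmod{2k+1}$ would force $i\equiv k$ or $i\equiv k+1$), the uniform value $|D^{i+1}_i(x_j,y_\ell)|=|D^i_{i+1}(x_j,y_\ell)|=6$ of part (v), and, at $i=k$, exactly one common coset in each of the $x$- and $y$-parts, whence $|D^k_k(x_j,y_\ell)|=4+2=6$. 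The small indices $i\in\{1,2,3,4\}$ (parts (i)--(iv)) and the boundary values between the irregular and stable ranges are handled the same way, but now using the non-uniform descriptions of $S_1,\dots,S_4$ in Lemmas~\ref{lemma:neighbours}, \ref{lemma1k} and~\ref{lemma2k}; there one also checks that $D^1_1,\dots,D^4_4$ are empty and reads off $(|D^1_0|,|D^2_1|,|D^3_2|,|D^4_3|)=(1,5,11,7)$ and $(|D^0_1|,|D^1_2|,|D^2_3|,|D^3_4|)=(1,4,11,8)$ (with the argument $(x_j,y_\ell)$ suppressed).

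Part (vii) is then a short count: since $\G^{(k)}$ has diameter $k$ (Lemmas~\ref{lemma1k} and~\ref{lemma2k}), summing (i)--(v) gives $|W_{x_j,y_\ell}|=\sum_{i=1}^{k}|D^i_{i-1}(x_j,y_\ell)|=1+5+11+7+6(k-4)=6k$ and likewise $|W_{y_\ell,x_j}|=1+4+11+8+6(k-4)=6k$, so the edge is balanced; and then
\[
\sum_{i=1}^{k-1}|D^i_i(x_j,y_\ell)|=|V_k|-|W_{x_j,y_\ell}|-|W_{y_\ell,x_j}|-|D^k_k(x_j,y_\ell)|=(12k+6)-12k-6=0,
\]
forcing every $D^i_i(x_j,y_\ell)$ with $1\le i\le k-1$ to be empty. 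The main obstacle is precisely the case analysis at the small indices $i\in\{1,2,3,4\}$: the sphere formulas there carry extra terms (for instance $j+\{2k-2,6k\}$ in $S_2(y_j)$, or $-2+j+\langle 4k+2\rangle$ in $S_3(y_j)$) that are not contained in a single coset of $\langle 2k+1\rangle$, so the clean coset argument must be replaced by an explicit intersection computation for each of the (at most three) edge types, verifying both the stated cardinalities and the vanishing of $D^1_1,\dots,D^4_4$; settling beforehand which of the three edge types are equivalent under $\Aut(\G^{(k)})$ would noticeably reduce this bookkeeping.
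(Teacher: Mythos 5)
The proposal is correct and takes essentially the same route as the paper's (omitted) proof: both obtain (i)--(vi) by intersecting the explicit sphere descriptions from Lemmas~\ref{lemma:neighbours}, \ref{lemma1k} and~\ref{lemma2k} for each of the three edge types, and both derive (vii) by exactly the summation and counting argument used for Lemma~\ref{propxj}. Your translation-automorphism reduction and the coset-of-$\langle 2k+1\rangle$ analysis in the range $5\le i\le k$ are sound and merely streamline the same computation.
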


We are now ready to provide  an infinite family of nonbipartite and nonregular NDB graphs which are not SDB. 

\begin{proposition}\label{mainprop}
	For an integer $k\geq 5$, let the graph $\G^{(k)}$ be as defined in Definition \ref{main:defk}.  Then, $\G^{(k)}$ is a nonbipartite NDB graph which is not SDB nor regular. 
\end{proposition}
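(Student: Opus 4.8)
The plan is to verify the three claimed properties of $\G^{(k)}$ — nonbipartite, NDB with a computable $\gamma$, and neither SDB nor regular — by assembling the structural lemmas already established, and by noting that the automorphism group of $\G^{(k)}$ acts with few orbits on edges so that balancedness need only be checked on representatives.

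First I would record the nonbipartiteness. By Definition~\ref{main:defk} the vertices $x_0, x_1, \ldots, x_{8k+3}$ form a cycle of length $8k+4$ together with the chords $y_i \sim x_{i+m}$; in particular, using an edge $x_j \sim x_{j+1}$ together with the two edges $y_j \sim x_j$ and $y_j \sim x_{j+2k-1}$ and a short $x$-path, one exhibits an odd closed walk (equivalently, a short odd cycle through a $y$-vertex), so $\G^{(k)}$ is not bipartite. The non-regularity is immediate from Lemma~\ref{lemma:neighbours}: $|S_1(x_j)| = 5$ while $|S_1(y_j)| = 6$, so the $x$-vertices and $y$-vertices have different valencies; the same fact, via Lemma~\ref{lem:edges strongly distance-balanced} applied to any edge $x_jy_\ell$ (for which $|S_1(x_j)| = 5 \ne 6 = |S_1(y_\ell)|$), shows that edge is not strongly distance-balanced, hence $\G^{(k)}$ is not SDB. (Alternatively one can quote the explicit inequality $|D^2_1(x_j,y_\ell)| = 5 \ne 4 = |D^1_2(x_j,y_\ell)|$ from Lemma~\ref{propxjyell}(ii).)

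Next I would establish the NDB property with $\gamma = 6k$. The circulant-type structure of $\G^{(k)}$ gives obvious automorphisms: the ``rotation'' $x_i \mapsto x_{i+1}$, $y_i \mapsto y_{i+1}$ (computations mod $8k+4$ and mod $4k+2$ respectively), and a ``reflection'' $x_i \mapsto x_{-i}$, $y_i \mapsto y_{-i}$ (one checks these preserve the connection sets $\{\pm 1\}$ and $M$). Since every edge of $\G^{(k)}$ is either of the form $x_jx_{j+1}$ or of the form $x_jy_\ell$ with $\ell - j \in \{0,\,2k-1,\,2k+1,\,4k+2,\,6k+1,\,6k+3\}$, and the rotation together with the reflection $x_i\mapsto x_{-i},\,y_i\mapsto y_{-i}$ carries each of the latter type onto an edge of the form $x_jy_\ell$ with $\ell - j \in K = \{0, 2k+1, 2k+3\}$ (pairing $2k-1$ with $2k+3$, $6k+1$ with $2k+3$, $6k+3$ with $2k+1$, and $4k+2$ with $0$ under a suitable rotation, using $y$-indices mod $4k+2$), it suffices to check balancedness on the two families $x_jx_{j+1}$ and $x_jy_\ell$ ($\ell\in K$). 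But Lemma~\ref{propxj}(vii) shows $|W_{x_j,x_{j+1}}| = |W_{x_{j+1},x_j}| = 6k$, and Lemma~\ref{propxjyell}(vii) shows $|W_{x_j,y_\ell}| = |W_{y_\ell,x_j}| = 6k$ for all $\ell \in K$. Hence every edge is balanced with the same constant $\gamma = 6k$, so $\G^{(k)}$ is NDB.

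The main obstacle is the bookkeeping needed to justify that the two edge-families $x_jx_{j+1}$ and $x_jy_\ell$ ($\ell\in K$) really do exhaust all edges up to automorphism — i.e., checking that the orbit of an edge $x_jy_\ell$ with $\ell-j\in M$ under $\langle$rotation, reflection$\rangle$ meets the set $\{x_jy_\ell : \ell-j\in K\}$ — since this requires matching elements of $M$ with elements of $K$ modulo $4k+2$, keeping the two different moduli straight. Everything else is a direct citation: nonbipartiteness from an explicit odd cycle, non-regularity and non-SDB from the valency discrepancy in Lemma~\ref{lemma:neighbours} (or Lemma~\ref{propxjyell}(ii)), and the NDB constant from Lemmas~\ref{propxj} and~\ref{propxjyell}. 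I would close by collecting these three conclusions to state that $\G^{(k)}$ is, for every $k \ge 5$, a nonbipartite NDB graph that is neither SDB nor regular, which together with Corollary~\ref{regular} completes the solution of Problem~\ref{problem:NDB not SDB} in the non-regular case.
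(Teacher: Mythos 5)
Your proposal is essentially correct and leans on the same ingredients as the paper: non-regularity and the failure of SDB come from the valency discrepancy in Lemma~\ref{lemma:neighbours} (equivalently $|D^2_1(x_j,y_\ell)|=5\neq 4=|D^1_2(x_j,y_\ell)|$ in Lemma~\ref{propxjyell}), and the NDB constant $\gamma=6k$ comes from Lemmas~\ref{propxj} and~\ref{propxjyell}. Two points of comparison are worth making. First, your nonbipartiteness argument (the explicit odd cycle $x_j,x_{j+1},\dots,x_{j+2k-1},y_j,x_j$ of length $2k+1$) is a perfectly good alternative to the paper's, which instead observes that $D^k_k(x_j,y_j)\neq\emptyset$ by Lemma~\ref{propxjyell}$(vi)$. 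Second, and more importantly, the automorphism/orbit machinery you introduce for the NDB step is unnecessary, and the step you single out as ``the main obstacle'' is in fact a non-issue: since the $y$-vertices form an independent set, every edge of $\G^{(k)}$ is incident with some $x_j$, and by Lemma~\ref{lemma:neighbours} the full neighbourhood of $x_j$ is exactly $\{x_{j\pm1}\}\cup\{y_{j+h}: h\in K\}$ with the $y$-indices read modulo $4k+2$. So every edge is \emph{literally} of one of the two forms covered by Lemmas~\ref{propxj} and~\ref{propxjyell} for the appropriate $j$ (the apparent extra cases $\ell-j\in M$ collapse onto $K$ simply by reducing modulo $4k+2$, not by applying any automorphism), and no orbit argument is needed. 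This matters because your proposed reflection $x_i\mapsto x_{-i}$, $y_i\mapsto y_{-i}$ is not obviously an automorphism: preserving the edges $y_i\sim x_{i+m}$ would require $-M\equiv M\pmod{8k+4}$, which fails since $-(2k-1)\equiv 6k+5\notin M$. Had the reduction genuinely depended on that reflection, this would be a gap; as it stands, the conclusion you need is immediate from Definition~\ref{main:defk} and Lemma~\ref{lemma:neighbours}, which is exactly how the paper proceeds.
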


\begin{proof}
	 By Definition \ref{main:defk} and Lemma \ref{lemma:neighbours}, it is clear that   $\G^{(k)}$ is not regular. { This implies that  $\G^{(k)}$ is not SDB since for at least one edge $uv$ the corresponding sets $D^1_2(u,v)$ and $D^2_1(u,v)$ will not be of the same cardinality.}  Pick $j\in \mathbb{Z}_{8k+4}$. Recall that $\{x_{j-1}, x_{j+1}\} \subseteq S_1(x_j)$ and $x_j \sim y_{j+m}$ with $m\in \{0,2k+1,2k+3\}$. It now follows from Lemma \ref{propxjyell}  that the edges $x_jx_{j+1}$, $x_jy_{j}$, $x_j  y_{2k+1+j}$ and $x_jy_{2k+3+j}$ are all balanced. Moreover, it turns out that $$|W_{x_j, x_{j+1}}|=|W_{x_j, y_{j}}|=|W_{x_j, y_{2k+j+1}}|=|W_{x_j, x_{2k+3+j}}|=6k.$$
	 In addition, for $i, i' \in \mathbb{Z}_{4k+2}$ we observe vertices $y_i$ and $y_{i'}$ are not adjacent. Since $j$ is arbitrary, we thus have all the edges of  $\G^{(k)}$ are balanced. Consequently, it follows from the above comments that  $\G^{(k)}$  is NDB with $\gamma=6k$. We also notice $\G^{(k)}$ is nonbipartite as the set $D^k_k(x_j,  y_{j})$ is nonempty by Lemma \ref{propxjyell}. This concludes the proof. 
\end{proof}

We end this section with following two remarks. 

\begin{remark}
Graphs $\G^{(3)}$ and $\G^{(4)}$ are also nonbipartite NDB graphs which are not SDB, with $\gamma=18$ and $\gamma=24$ respectively, but we considered only the case when $k\geq 5$ for the simplicity of proofs.   
\end{remark}

\begin{remark}
Graphs $\G^{(k)}$ defined in Definition \ref{main:defk} are prime with respect to the { Cartesian} product of graphs (cannot be obtained as a { Cartesian} product of two non-trivial graphs). 
{ 
Suppose $\G^{(k)}\cong G \square H$ for some graphs $G$ and $H$.
Observe that the edge $x_ix_{i+1}$ lies on exactly 2 cycles of length 4 in $\G^{(k)}$ for every $i\in \mathbb{Z}_{8k+4}$.  Since the vertices of $\Gamma^{(k)}$ have degree 5 or 6, without loss of generality we may assume that the minimum degree in $G$ is at least 3. It follows that the edge $x_{i}x_{i+1}$ must belong to the $H$-layers in the Cartesian product $G\square H$, since it lies only on 2 cycles of length 4. Then, it holds that all of the $x_i$ vertices belong to the same $H$-layer, implying that $H$ has at least $8k+4$ vertices. Since $|V(\Gamma^{(k)})|=12k+6=|V(G)|\cdot |V(H)|$, it follows that $G$ is the graph with one vertex.
}
\end{remark}

\section{Counterexamples to a conjecture regarding SDB graphs}\label{sec:Counterexample to conjecture Sparl}

Let $\G$ be a graph, and let $S$ be a subset of its vertex set. For a vertex $v$ of $\G$ we define 
$$d(v,S)=\sum_{x\in S}d(v,x).$$
Balakrishnan  {et al.}\  \cite{BCPSSS} proved that a connected graph $\G$ is distance-balanced if and only if $d(v,V(\G))=d(u,V(\G)))$ for all $u,v\in V(\G)$.
They posed the following conjecture regarding a similar characterization of strongly distance-balanced graphs.

\begin{conjecture}\label{conjecture Sparl}\cite[Conjecture 3.2]{BCPSSS} A graph $\G$ is strongly distance-balanced if and only if $d(u,W_{u,v})= d(v,W_{v,u})$ holds for
	every pair of adjacent vertices $u, v$ of $\G$.
\end{conjecture}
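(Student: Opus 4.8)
The statement is a biconditional whose forward implication is a genuine theorem but whose converse is false; so the right move is to prove one direction and \emph{refute} the other, and my plan splits along this line.

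\textbf{The easy implication.} If $\G$ is SDB then, directly from the definition (take $i\mapsto i+1$), $|D^i_{i+1}(u,v)|=|D^{i+1}_i(u,v)|$ for every edge $uv$ and every $i\ge 0$. Since $uv\in E(\G)$, a vertex $x$ lies in $W_{u,v}$ exactly when $d(v,x)=d(u,x)+1$, hence $W_{u,v}=\bigcup_{i\ge 0}D^i_{i+1}(u,v)$ (disjoint union) and $d(u,W_{u,v})=\sum_{i\ge 0}i\,|D^i_{i+1}(u,v)|$; symmetrically $d(v,W_{v,u})=\sum_{i\ge 0}i\,|D^{i+1}_i(u,v)|$. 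A term-by-term comparison yields $d(u,W_{u,v})=d(v,W_{v,u})$. (Alternatively, invoke Proposition~\ref{prop:SDB characterization} together with Lemma~\ref{lem:edges strongly distance-balanced}.) Thus the entire content of the conjecture sits in the converse.

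\textbf{Refuting the converse.} The crucial observation is that $d(u,W_{u,v})=d(v,W_{v,u})$ amounts to the \emph{single} weighted equation $\sum_{i\ge 1}i\,\delta_i(u,v)=0$, where $\delta_i(u,v):=|D^i_{i+1}(u,v)|-|D^{i+1}_i(u,v)|$, whereas SDB demands $\delta_i(u,v)=0$ for each $i$ separately. So I would hunt for a graph in which, on some edge, the defect vector $(\delta_i)$ is nonzero but cancels once weighted by $i$ — the most economical pattern having $(\delta_i)$ supported on two or three consecutive indices — and, crucially, has this cancelling shape on \emph{every} edge-orbit. The concrete steps: (i) find such a graph of small order by computer, restricting to graphs that are not vertex-transitive (Proposition~\ref{prop:SDB characterization} excludes vertex-transitive ones), ideally also NDB so that it is automatically DB; (ii) verify the distance identity by reducing, via $\Aut(\G)$, to one representative edge per orbit and computing the distance partition of each, exactly as in the proof of Proposition~\ref{ex:reg}; (iii) promote a single example to an infinite family — either by parametrising the base construction directly, or, when the base graph $G$ is DB, by taking Cartesian products $G\,\square\,H$ with SDB graphs $H$: such a product is not SDB by Lemma~\ref{SDBGH}, and since distances add over the Cartesian product one gets, for an edge inside a $G$-layer, $d\big((g_1,h),W_{(g_1,h),(g_2,h)}\big)=|V(H)|\,d_G(g_1,W^G_{g_1,g_2})+|W^G_{g_1,g_2}|\,d_H(h,V(H))$, and likewise for $H$-layer edges, so the weighted identity is inherited (the residual terms match because $G$ is DB and $H$, being SDB, is both DB and already satisfies the distance identity).

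\textbf{The main obstacle.} The difficulty is the combinatorial design, not the bookkeeping: the distance identity holds trivially on ``symmetric'' edges and is delicate on all edges at once, so the NDB-but-not-SDB graphs of Section~3 do \emph{not} work — for instance on the edge $(0,0)(1,1)$ of the graph of Definition~\ref{def:tricirc} one finds $d\big((0,0),W_{(0,0),(1,1)}\big)=23\neq 22=d\big((1,1),W_{(1,1),(0,0)}\big)$. Hence a new gadget is needed whose adjacency pattern forces the cancelling defect shape on every edge-orbit simultaneously, together with enough uniformity to keep control of the distance partition along the whole infinite family.
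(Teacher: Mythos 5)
Your forward implication is correct and is the easy half (the paper dismisses it in one sentence, since $W_{u,v}=\bigcup_i D^i_{i+1}(u,v)$ and SDB gives termwise equality of the weighted sums). Your diagnosis of where the content lies is also exactly right: the condition $d(u,W_{u,v})=d(v,W_{v,u})$ is the single linear relation $\sum_i i\,\delta_i(u,v)=0$ on the defect vector, whereas SDB forces $\delta_i(u,v)=0$ for each $i$, so one needs a graph whose defects cancel only after weighting by $i$, on every edge simultaneously.

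The genuine gap is that you never exhibit such a graph: steps (i)--(iii) are a search strategy, not a construction, and a disproof of the conjecture is incomplete without a concrete witness. The paper supplies one that realizes precisely the two-consecutive-index cancellation pattern you describe. Let $C_6(k,l)$ be the $6$-cycle with one bipartition class blown up to $k$ independent vertices and the other to $l$. This graph is edge-transitive (so only one edge need be checked), is regular --- hence SDB --- only when $k=l$, and for the edge $u=(x_0,0)$, $v=(x_1,0)$ one computes $|D^1_2|=2l-1$, $|D^2_1|=2k-1$, $|D^2_3|=k$, $|D^3_2|=l$, giving $\delta_1=2(l-k)$, $\delta_2=k-l$ and $1\cdot\delta_1+2\cdot\delta_2=0$, i.e.\ $d(u,W_{u,v})=d(v,W_{v,u})=2k+2l-1$. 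Taking $k\neq l$ already yields an infinite family, so the paper does not need your Cartesian-product step (which, for the record, is correctly set up: the weighted identity is inherited for $G$-layer edges because $G$ is DB and satisfies the identity, and for $H$-layer edges because $H$ is SDB --- but it only helps once a base example exists). To complete your argument you must replace the ``hunt'' in step (i) with an explicit graph and verify the distance partition of a representative edge, as the paper does for $C_6(k,l)$.
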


It is clear that strongly distance-balanced graphs satisfy the above condition, but the question was if the converse also holds. We will now provide an {infinite} family of counterexamples to Conjecture~\ref{conjecture Sparl}.

Let $k$ and $l$ be positive integers. Let $C_6(k,l)$ denote the graph obtained from the 6-cycle by replacing every vertex in one bipartition set of $C_6$ with $k$ pairwise non-adjacent vertices, and replacing every vertex in the other bipartition set of $C_6$ with $l$ pairwise non-adjacent vertices, see Figure~\ref{001} for an example.
To be more precise, let $\{x_0,x_1,x_2,x_3,x_4,x_5\}$ be the vertex set of the $6$-cycle, and let the vertex-set of $C_6(k,l)$ be $(\{x_0,x_2,x_4\}\times \mathbb{Z}_k) \cup (\{x_1,x_3,x_5\}\times \mathbb{Z}_l)$, and adjacencies given by $(x_{2i},r)\sim (x_{2i\pm 1},s)$ for every $i\in \{0,1,2\}$ and every $r\in \mathbb{Z}_k$, $s\in \mathbb{Z}_l$.   Observe that any permutation of vertices inside sets $\{x_{2i}\}\times \mathbb{Z}_{k}$ and $\{x_{2i+1}\} \times \mathbb{Z}_{l}$, preserves all the edges. Hence, {it} is an automorphism. Observe also that the 2-step rotation, function mapping $(x_i,j)$ into $(x_{i+2},j)$ {it} is also an automorphism of $C_6(k,l)$. 
It follows that the graph $C_6(k,l)$ is edge-transitive. Observe that $C_6(k,l)$ is vertex-transitive if and only if $k=l$. 

{\small\begin{figure}[!ht]{\rm
			\begin{center}
				\begin{tikzpicture}[line cap=round,line join=round,>=triangle 45,x=0.8cm,y=0.8cm, scale=0.55]
					\draw [line width=2pt] (-3.2963820752829553,-6.937153039137263)-- (-5.501742400013352,-10.756949170566905);
					\draw [line width=2pt] (-3.2963820752829553,-6.937153039137263)-- (-7,-10.803847577293364);
					\draw [line width=1.5pt] (-7,-10.803847577293364)-- (-4,-5.607695154586734);
					\draw [line width=1.5pt] (-4,-5.607695154586734)-- (2,-5.607695154586736);
					\draw [line width=1.5pt] (2,-5.607695154586736)-- (5,-10.80384757729337);
					\draw [line width=1.5pt] (2,-5.607695154586736)-- (3.3196988989082326,-10.75694917056691);
					\draw [line width=1.5pt] (1.1143385741778362,-6.937153039137263)-- (5,-10.80384757729337);
					\draw [line width=1.5pt] (1.1143385741778362,-6.937153039137263)-- (3.3196988989082326,-10.75694917056691);
					\draw [line width=1.5pt] (3.3196988989082326,-10.75694917056691)-- (1.114338574177835,-14.576745301996553);
					\draw [line width=1.5pt] (3.3196988989082326,-10.75694917056691)-- (2,-16);
					\draw [line width=1.5pt] (2,-5.607695154586736)-- (-3.2963820752829553,-6.937153039137263);
					\draw [line width=1.5pt] (1.1143385741778362,-6.937153039137263)-- (-4,-5.607695154586734);
					\draw [line width=1.5pt] (-3.2963820752829553,-6.937153039137263)-- (1.1143385741778362,-6.937153039137263);
					\draw [line width=1.5pt] (-4,-5.607695154586734)-- (-5.501742400013352,-10.756949170566905);
					\draw [line width=1.5pt] (-5.501742400013352,-10.756949170566905)-- (-3.2963820752829562,-14.576745301996551);
					\draw [line width=2pt] (-5.501742400013352,-10.756949170566905)-- (-4,-16);
					\draw [line width=1.5pt] (-7,-10.803847577293364)-- (-3.2963820752829562,-14.576745301996551);
					\draw [line width=1.5pt] (-7,-10.803847577293364)-- (-4,-16);
					\draw [line width=1.5pt] (-4,-16)-- (1.114338574177835,-14.576745301996553);
					\draw [line width=1.5pt] (-4,-16)-- (2,-16);
					\draw [line width=1.5pt] (-3.2963820752829562,-14.576745301996551)-- (1.114338574177835,-14.576745301996553);
					\draw [line width=1.5pt] (-3.2963820752829562,-14.576745301996551)-- (2,-16);
					\draw [line width=1.5pt] (1.114338574177835,-14.576745301996553)-- (5,-10.80384757729337);
					\draw [line width=1.5pt] (2,-16)-- (5,-10.80384757729337);
					\draw [line width=1.5pt] (6.33855808908444,-10.782548690559663)-- (1.114338574177835,-14.576745301996553);
					\draw [line width=1.5pt] (6.33855808908444,-10.782548690559663)-- (2,-16);
					\draw [line width=1.5pt] (-4.757500898992794,-17.209829319120182)-- (1.114338574177835,-14.576745301996553);
					\draw [line width=1.5pt] (-4.757500898992794,-17.209829319120182)-- (2,-16);
					\draw [line width=1.5pt] (-4.757500898992794,-17.209829319120182)-- (-7,-10.803847577293364);
					\draw [line width=1.5pt] (-4.757500898992794,-17.209829319120182)-- (-5.501742400013352,-10.756949170566905);
					\draw [line width=1.5pt] (-5.501742400013352,-10.756949170566905)-- (-4.775659706539203,-4.386720039274386);
					\draw [line width=1.5pt] (-4.775659706539203,-4.386720039274386)-- (-7,-10.803847577293364);
					\draw [line width=1.5pt] (-4.775659706539203,-4.386720039274386)-- (2,-5.607695154586736);
					\draw [line width=1.5pt] (-4.775659706539203,-4.386720039274386)-- (1.1143385741778362,-6.937153039137263);
					\draw [line width=1.5pt] (2,-5.607695154586736)-- (6.33855808908444,-10.782548690559663);
					\draw [line width=1.5pt] (6.33855808908444,-10.782548690559663)-- (1.1143385741778362,-6.937153039137263);
					\begin{scriptsize}
						\draw [fill=uuuuuu] (-4,-16) circle (5.5pt);
						\draw [fill=uuuuuu] (2,-16) circle (5.5pt);
						\draw [fill=uuuuuu] (5,-10.80384757729337) circle (5.5pt);
						\draw [fill=uuuuuu] (2,-5.607695154586736) circle (5.5pt);
						\draw [fill=uuuuuu] (-4,-5.607695154586734) circle (5.5pt);
						\draw [fill=uuuuuu] (-7,-10.803847577293364) circle (5.5pt);
						\draw [fill=uuuuuu] (1.1143385741778362,-6.937153039137263) circle (5.5pt);
						\draw [fill=uuuuuu] (-3.2963820752829553,-6.937153039137263) circle (5.5pt);
						\draw [fill=uuuuuu] (-5.501742400013352,-10.756949170566905) circle (5.5pt);
						\draw [fill=uuuuuu] (-3.2963820752829562,-14.576745301996551) circle (5.5pt);
						\draw [fill=uuuuuu] (1.114338574177835,-14.576745301996553) circle (5.5pt);
						\draw [fill=uuuuuu] (3.3196988989082326,-10.75694917056691) circle (5.5pt);
						\draw [fill=uuuuuu] (-4.757500898992794,-17.209829319120182) circle (5.5pt);
						\draw [fill=uuuuuu] (-4.775659706539203,-4.386720039274386) circle (5.5pt);
						\draw [fill=uuuuuu] (6.33855808908444,-10.782548690559663) circle (5.5pt);
					\end{scriptsize}
				\end{tikzpicture}
				
				\caption{\rm 
					Graph $C_6(2,3)$.
				}
				\label{001}
			\end{center}
}\end{figure}
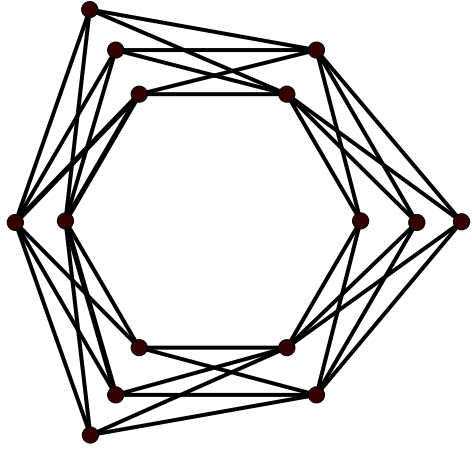}

The following proposition shows that graph $C_6(k,l)$ with $k\neq l$ is a counterexample to Conjecture~\ref{conjecture Sparl}.

\begin{proposition}
	{Let $k$ and $l$ be positive integers, and let the graph $C_6(k,l)$. 
	Then $C_6(k,l)$ is strongly-distance balanced if and only if $k=l$, while $d(u,W_{u,v})= d(v,W_{v,u})$ holds for
	every pair of adjacent vertices $u, v$ of $C_6(k,l)$}.
\end{proposition}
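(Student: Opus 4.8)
The plan is to analyze the graph $C_6(k,l)$ directly from its explicit description, exploiting its edge-transitivity so that only one edge needs to be examined for each of the two claims. First I would fix a representative edge, say $e=(x_0,0)(x_1,0)$, and compute the full distance partition with respect to each of its endpoints. Writing $u=(x_0,0)$, $v=(x_1,0)$, the key observation is that $C_6(k,l)$ has diameter $3$, and the sphere sizes are easy to read off: from $u=(x_0,0)$ one has $|S_0(u)|=1$, $|S_1(u)|=2l$ (the vertices $(x_1,s)$ and $(x_5,s)$), $|S_2(u)|=2(k-1)$ (the vertices $(x_2,r)$ and $(x_4,r)$ with $r\neq 0$, plus — wait, one must be careful — actually $(x_2,r)$ and $(x_4,r)$ for all $r$), and $|S_3(u)|=k-1$ (the vertices $(x_3,\cdot)$ minus... ); I would recount these carefully, but the point is they are explicit linear functions of $k$ and $l$. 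Analogously the sphere sizes around $v$ are obtained by swapping the roles of $k$ and $l$. Then by Lemma~\ref{lem:edges strongly distance-balanced} (applied to every edge via edge-transitivity), $C_6(k,l)$ is SDB if and only if $|S_i(u)|=|S_i(v)|$ for all $i$, and comparing the expressions one sees this forces $k=l$; conversely $k=l$ gives vertex-transitivity, hence SDB by the result of Kutnar et al.\ quoted after Proposition~\ref{prop:SDB characterization}.

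For the second claim, I would compute $W_{u,v}$ and $W_{v,u}$ explicitly for the representative edge $e=(x_0,0)(x_1,0)$. Going around the $6$-cycle, $W_{u,v}$ consists of the ``blown-up'' vertices lying over $x_0$, $x_2$, $x_4$ on the side of $u$, and $W_{v,u}$ of those over $x_1$, $x_3$, $x_5$; I would record, for each element of $W_{u,v}$, its distance from $u$, and similarly for $W_{v,u}$ from $v$. Summing, $d(u,W_{u,v})$ and $d(v,W_{v,u})$ each become explicit expressions; the crucial point is that the ``pattern'' of distances from $u$ into $W_{u,v}$ (one vertex at distance $0$, some block at distance $2$, some block at distance $2$ again or $3$) mirrors exactly the pattern of distances from $v$ into $W_{v,u}$ with the roles of $k$ and $l$ interchanged — but because the two halves $W_{u,v}$ and $W_{v,u}$ are themselves symmetric under the $2$-step rotation composed with a suitable reflection, the total sums coincide regardless of $k,l$. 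Concretely I expect both sums to evaluate to the same polynomial in $k$ and $l$ (something like $2k+2l-$const, after the arithmetic), and then invoke edge-transitivity to extend to all edges.

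The main obstacle I anticipate is purely bookkeeping: correctly identifying which blown-up vertices fall into $W_{u,v}$ versus $W_{v,u}$ versus neither (the vertices equidistant from $u$ and $v$), and getting the multiplicities — factors of $k$ versus $l$ — right for each distance class, since a sign error there would break the symmetry. To keep this under control I would organize the computation in a small table indexed by the six ``columns'' $x_0,\dots,x_5$, listing for each column its size ($k$ or $l$), its distance to $u$, and its distance to $v$; then $W_{u,v}$ is exactly the set of columns strictly closer to $u$, and the sum $d(u,W_{u,v})$ is read directly off the table. The verification that the two sums agree while the sphere sizes do not (unless $k=l$) is then immediate, and it is precisely this mismatch that refutes Conjecture~\ref{conjecture Sparl}. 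One should also double-check the degenerate small cases (e.g.\ $k=1$ or $l=1$, where some spheres vanish) to be sure the formulas remain valid, though edge-transitivity and the uniformity of the construction make this routine.
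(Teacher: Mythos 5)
Your overall strategy is the same as the paper's: settle both claims by an explicit computation at the single representative edge $u=(x_0,0)$, $v=(x_1,0)$ and extend to all edges by edge-transitivity, with the SDB direction reduced to comparing first spheres (the paper phrases this as regularity, since $|S_1(u)|=2l$ while $|S_1(v)|=2k$) and the converse for $k=l$ obtained from vertex-transitivity. That part of your plan is sound, even though your tentative sphere counts are off (the correct values are $|S_1(u)|=2l$, $|S_2(u)|=3k-1$, $|S_3(u)|=l$, summing to $3k+3l=|V|$); only $|S_1|$ is actually needed.

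There is, however, a concrete error in the second half that is stated as fact rather than flagged as uncertainty: you assert that $W_{u,v}$ consists of the blown-up vertices over $x_0,x_2,x_4$ and $W_{v,u}$ of those over $x_1,x_3,x_5$. This is essentially backwards. Every $(x_0,r)$ with $r\neq 0$ is at distance $2$ from $u$ but distance $1$ from $v$, so it lies in $W_{v,u}$, and likewise all of $\{x_2\}\times\mathbb{Z}_k$ lies in $W_{v,u}$. The correct decomposition is $W_{u,v}=\{u\}\cup\bigl(\{x_1\}\times(\mathbb{Z}_l\setminus\{0\})\bigr)\cup(\{x_5\}\times\mathbb{Z}_l)\cup(\{x_4\}\times\mathbb{Z}_k)$ and $W_{v,u}=\{v\}\cup\bigl(\{x_0\}\times(\mathbb{Z}_k\setminus\{0\})\bigr)\cup(\{x_2\}\times\mathbb{Z}_k)\cup(\{x_3\}\times\mathbb{Z}_l)$, which gives $d(u,W_{u,v})=(2l-1)+2k=2k+2l-1=(2k-1)+2l=d(v,W_{v,u})$, exactly as in the paper. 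Note also that your proposed table indexed by the six columns would not by itself catch this, because the columns over $x_0$ and over $x_1$ are not homogeneous: $u$ is closer to $u$ while the other $k-1$ vertices of its column are closer to $v$ (and symmetrically for $v$'s column), so those two columns must be split before reading off the sums. Once these corrections are made, your argument coincides with the paper's proof.
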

\begin{proof}
	Observe that $C_6(k,l)$ is regular if and only if $k=l$. It follows that for $k\neq l$, the graph $C_6(k,l)$ is not strongly-distance-balanced. Moreover, for $k=l$, the graph $C_6(k,l)$ is vertex-transitive, and since every vertex-transitive graph is strongly-distance-balanced it follows that $C_6(k,l)$ is SDB if and only if $k=l$.
	
	Let $u=(x_0,0)$ and $v=(x_1,0)$. Observe that 
	\begin{align*} 
		D^1_2(u, v) &=(\{x_1\}\times (\mathbb{Z}_l \setminus \{0\}))\cup (\{x_5\}\times \mathbb{Z}_l), \\ 
		D^2_1(u, v) &=\left(\{x_0\}\times (\mathbb{Z}_k \setminus \{0\})\right)\cup (\{x_2\}\times \mathbb{Z}_k),\\
		D^2_3(u, v) &= (\{x_4\}\times \mathbb{Z}_k),\\
		D^3_2(u, v) &= (\{x_3\}\times \mathbb{Z}_l).\\
	\end{align*}
	It follows that $d(u,W_{u,v})=|D^1_2(u, v)| + 2\cdot |D^2_3(u, v)| =(2l-1)+2k=2k+2l-1$. 
	Similarly we have $d(v,W_{v,u})=|D^2_1(u, v)|+2\cdot |D^3_2(u, v)|=(2k-1)+2l=2k+2l-1$.  We conclude that $d(u,W_{u,v})= d(v,W_{v,u})$. Since the graph $C_6(k,l)$ is edge-transitive, it follows that the same holds for any pair of adjacent vertices. This concludes the proof.
\end{proof}

\section{Distance-balanced property in semisymmetric graphs}
\label{sec:semisymmetric}

The main goal for this section is to answer a question by Kutnar  {et al.}\ from \cite{KMMM05}.

 Symmetry is perhaps one of those purely mathematical concepts that has found wide applications in several other branches of science and in many of these problems, symmetry conditions are naturally blended with certain metric properties of the underlying graphs. Kutnar  {et al.}\ explored a purely metric property of being (strongly) distance-balanced in the context of graphs enjoying certain special symmetry conditions. They showed that vertex-transitive graphs are not only distance-balanced,  they are also strongly distance-balanced (see \cite{KMMM05}). Furthermore, since being vertex-transitive is not a necessary condition for a graph to be distance-balanced, it was therefore natural for the authors to explore the property of being distance-balanced within the class of {\it semisymmetric graphs}; a class of objects which are as close to vertex-transitive graphs as one can possibly get, that is, regular edge-transitive graphs which are not vertex-transitive. The smallest semisymmetric graph has $20$ vertices and its discovery is due to Folkman \cite{Folkman}, the initiator of this topic of research. 
 
 A semisymmetric graph is necessarily bipartite, with the two sets of bipartition coinciding with the two orbits of the automorphism group. Consequently, semisymmetric graphs have no automorphisms which switch adjacent vertices, and therefore, may arguably be considered as good candidates for graphs which are not distance-balanced.
 Indeed, Kutnar  {et al.}\ proved there are infinitely many semisymmetric graphs which are not distance-balanced, but there are also infinitely many semisymmetric graphs which are distance-balanced. They also wondered the following question.
 
 \begin{question}\cite[Question 4.6]{KMMM05}
 	Is it true that a distance-balanced semisymmetric graph is also strongly distance-balanced?
 \end{question}
 
 We next answer this question negatively  by giving a construction of an infinite family of semisymmetric DB graphs which are not SDB. Before embarking on the corresponding construction, we make the following observations about the distance-balanced property in semisymmetric graphs using certain graph product.

 Let $G$ and $H$ denote graphs. The {\it{lexicographic product of $G$ and $H$}}, denoted by $G[H]$, is the graph with vertex set $V(G)\times V(H)$ where two vertices $(g_1, h_1)$ and $(g_2, h_2)$ are adjacent if and only if $g_1\sim g_2$, or $g_1=g_2$ and $h_1 \sim h_2$. It turns out that the lexicographic product $G[H]$ is connected if and only if $G$ is connected. 
 
 Necessary and sufficient conditions under which the lexicographic product give rises to a distance-balanced graph are given in \cite{JKR}.
 
 \begin{lemma}\cite[Theorem 4.2]{JKR}\label{semi1}
 	Let $G$ and $H$ be connected graphs. Then, the lexicographic product $G[H]$ is distance-balanced if
 	and only if $G$ is distance-balanced and $H$ is regular.
 \end{lemma}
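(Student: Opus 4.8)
The plan is to follow the standard approach for lexicographic products via the known distance formula and the characterization of distance-balanced graphs by the sum of distances. First I would recall the distance formula in the lexicographic product $G[H]$: for distinct vertices $(g_1,h_1)$ and $(g_2,h_2)$, we have $d_{G[H]}((g_1,h_1),(g_2,h_2)) = d_G(g_1,g_2)$ if $g_1 \neq g_2$, and $d_{G[H]}((g_1,h_1),(g_2,h_2)) = \min\{d_H(h_1,h_2), 2\}$ if $g_1 = g_2$ (using that $G$ being connected and nontrivial forces a detour of length $2$ through a neighbouring $G$-layer). I would then use the Balakrishnan et al. characterization mentioned just above Conjecture~\ref{conjecture Sparl}: $G[H]$ is distance-balanced if and only if $d((g,h),V(G[H]))$ is independent of the vertex $(g,h)$.

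The key computational step is to expand $d((g,h),V(G[H]))$ by splitting the sum over vertices according to whether they lie in the same $G$-layer as $(g,h)$ or a different one. The contribution from other layers is $\sum_{g' \neq g} |V(H)| \, d_G(g,g') = |V(H)| \cdot d(g, V(G))$, which depends only on $g$. The contribution from within the layer $\{g\}\times V(H)$ is $\sum_{h' \in V(H)} \min\{d_H(h,h'),2\}$, which equals $2(|V(H)| - 1) - (\deg_H(h))$, i.e.\ it depends only on $|V(H)|$ and the degree of $h$ in $H$. So the total is $|V(H)|\cdot d(g,V(G)) + 2(|V(H)|-1) - \deg_H(h)$.

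From this closed form the equivalence follows cleanly. If $G$ is distance-balanced (so $d(g,V(G))$ is constant in $g$) and $H$ is regular (so $\deg_H(h)$ is constant in $h$), then the displayed sum is constant, hence $G[H]$ is distance-balanced. Conversely, if $G[H]$ is distance-balanced, then fixing $g$ and varying $h$ shows $\deg_H(h)$ is constant, so $H$ is regular; and then, the sum being constant as $g$ varies forces $|V(H)|\cdot d(g,V(G))$ to be constant, so (since $|V(H)| \geq 1$) $d(g,V(G))$ is constant, i.e.\ $G$ is distance-balanced. A minor edge case to dispose of is $|V(H)| = 1$, where $G[H] \cong G$ and $H$ is trivially regular, so the statement reduces to a tautology; and one should also note $G$ nontrivial is implicitly fine since if $|V(G)|=1$ then $G[H] = H$ and both sides of the equivalence say ``$H$ is distance-balanced and regular'' $\iff$ ``$H$ distance-balanced'', which holds because a connected distance-balanced graph of the relevant type — actually here one just observes $G$ being a single vertex is trivially DB, so the claim is that $H$ DB $\iff$ $H$ DB and regular, which is false in general, so implicitly $G$ is assumed nontrivial; I would simply assume $G$ has at least two vertices as is standard, or cite that this is the hypothesis under which Theorem 4.2 of \cite{JKR} is stated.

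The main obstacle, such as it is, is getting the within-layer distance right: one must justify that for $h \not\sim h'$ in $H$ with $h \neq h'$, the distance in $G[H]$ is exactly $2$ rather than larger — this uses that $G$ is connected with at least one edge, so there is a neighbour $g'$ of $g$, and the path $(g,h) \sim (g',h') \sim (g,h')$ has length $2$; and it cannot be $1$ since $h \not\sim h'$. Everything else is bookkeeping with finite sums, and since this lemma is quoted directly from \cite[Theorem 4.2]{JKR}, I would likely present it in condensed form, emphasizing the distance formula and the decomposition of the distance-sum, and leave the routine constant-counting to the reader.
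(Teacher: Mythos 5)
The paper does not prove this lemma at all: it is imported verbatim as \cite[Theorem~4.2]{JKR}, so there is no internal proof to compare against. Your self-contained argument is correct and is essentially the standard proof of that theorem: the distance formula $d_{G[H]}((g,h),(g',h'))=d_G(g,g')$ for $g\neq g'$ and $\min\{d_H(h,h'),2\}$ for $g=g'$, combined with the self-median characterization of distance-balancedness from \cite{BCPSSS}, gives the closed form $|V(H)|\cdot d(g,V(G))+2(|V(H)|-1)-\deg_H(h)$, from which both implications follow by varying $h$ with $g$ fixed and then varying $g$. Your bookkeeping for the within-layer sum ($\deg_H(h)$ terms equal to $1$, the remaining $|V(H)|-1-\deg_H(h)$ equal to $2$) is right, and you correctly isolate the one genuine hypothesis hidden in the statement: the within-layer distance formula, and hence the lemma itself, requires $G$ to have at least two vertices (otherwise $G[H]\cong H$ and the claim would force every connected distance-balanced graph to be regular, which is false --- the graphs $\G^{(k)}$ of Definition~\ref{main:defk} in this very paper are non-regular NDB graphs). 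So the only substantive remark is that you have supplied a proof where the paper supplies a citation; no gap.
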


Kutnar  {et al.}\ also investigated the strongly distance-balanced property of lexicographic graph products. 

 \begin{lemma}\cite[Theorem 3.4]{KMMM05}\label{semi2}
	Let $G$ and $H$ be graphs such that  $G[H]$ is connected. Then, the lexicographic product $G[H]$ is strongly distance-balanced if
	and only if $G$ is strongly distance-balanced and $H$ is regular.
\end{lemma}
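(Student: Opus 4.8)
The plan is to reduce everything to Proposition~\ref{prop:SDB characterization}, which characterises SDB graphs as those in which adjacent vertices have equal sphere sizes $|S_i(\cdot)|$ at every radius. The first step is therefore to express the spheres of $G[H]$ through those of $G$ and $H$; I will write $S^G_i$, $S^H_i$ for spheres in $G$, $H$ and keep $S_i$ for spheres in $G[H]$, and recall that $G[H]$ being connected is the same as $G$ being connected --- I will tacitly assume $|V(G)|\ge 2$ (when $|V(G)|=1$ one has $G[H]=H$, a degenerate case). A short case analysis gives the distance formula for the lexicographic product: $d_{G[H]}((g_1,h_1),(g_2,h_2))=d_G(g_1,g_2)$ whenever $g_1\ne g_2$, while inside a single fibre $d_{G[H]}((g,h_1),(g,h_2))$ is $1$ if $h_1\sim h_2$ in $H$ and $2$ otherwise (a detour of length $2$ through a neighbour of $g$ in $G$ always being available). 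Consequently, for $x=(g,h)$ one gets $S_1(x)=(S^G_1(g)\times V(H))\cup(\{g\}\times S^H_1(h))$, $S_2(x)=(S^G_2(g)\times V(H))\cup(\{g\}\times(V(H)\setminus(\{h\}\cup S^H_1(h))))$, and $S_i(x)=S^G_i(g)\times V(H)$ for every $i\ge 3$; taking cardinalities, $|S_1(x)|=|S^G_1(g)|\cdot|V(H)|+\deg_H(h)$, $|S_2(x)|=|S^G_2(g)|\cdot|V(H)|+|V(H)|-1-\deg_H(h)$, and $|S_i(x)|=|S^G_i(g)|\cdot|V(H)|$ for $i\ge 3$. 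I also record that the edges of $G[H]$ split into \emph{fibre edges} $(g,h_1)(g,h_2)$ with $h_1\sim h_2$ in $H$ and \emph{projecting edges} $(g_1,h_1)(g_2,h_2)$ with $g_1\sim g_2$ in $G$.

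For the implication $(\Leftarrow)$, I would assume $G$ is SDB --- hence regular, with $|S^G_i(g_1)|=|S^G_i(g_2)|$ for adjacent $g_1,g_2$ by Proposition~\ref{prop:SDB characterization} --- and $H$ is $r$-regular. Substituting $\deg_H\equiv r$ into the three cardinality formulas above, the $\deg_H$-terms cancel across every edge of $G[H]$, and the remaining $|S^G_i|$-terms agree on fibre edges trivially and on projecting edges because $G$ is SDB; hence $|S_i(x)|=|S_i(y)|$ for every edge $xy$ of $G[H]$ and every $i$, and Proposition~\ref{prop:SDB characterization} shows that $G[H]$ is SDB.

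For the implication $(\Rightarrow)$, assume $G[H]$ is SDB. An SDB graph is regular (the case $i=1$ of Proposition~\ref{prop:SDB characterization}), and $\deg_{G[H]}((g,h))=\deg_G(g)\cdot|V(H)|+\deg_H(h)$; comparing two vertices with a common first coordinate forces $\deg_H$ to be constant, so $H$ is $r$-regular for some $r$, and then comparing two vertices with a common second coordinate forces $\deg_G$ to be constant. It remains to show $G$ is SDB, which I would do via Proposition~\ref{prop:SDB characterization}: for any edge $g_1g_2$ of $G$ and any $h\in V(H)$, the pair $(g_1,h)(g_2,h)$ is an edge of $G[H]$, so $|S_i((g_1,h))|=|S_i((g_2,h))|$ for every $i$; substituting $\deg_H(h)=r$ into the cardinality formulas, cancelling the common constants and dividing by $|V(H)|$ yields $|S^G_i(g_1)|=|S^G_i(g_2)|$ for every $i$, hence $G$ is SDB.

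The only genuinely delicate step is the $S_2$ bookkeeping: one must observe that the vertices $(g,h')$ with $h'\ne h$ and $h'\not\sim h$ all lie at distance exactly $2$ from $(g,h)$, which is what produces the correction term $|V(H)|-1-\deg_H(h)$, and it is precisely the regularity of $H$ that makes this term cancel across an edge in both directions; so I expect this is where the hypothesis on $H$ is genuinely used. The remaining work is pure substitution into Proposition~\ref{prop:SDB characterization}, together with the trivial remark that both spheres vanish for radii exceeding $\max\{\operatorname{diam}(G),\operatorname{diam}(G[H])\}$, so that only finitely many radii need to be checked.
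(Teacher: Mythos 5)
The paper does not actually prove Lemma~\ref{semi2}: it is quoted verbatim from \cite[Theorem 3.4]{KMMM05}, so there is no in-paper argument to compare yours against. Your proof is correct and self-contained, and it follows what is essentially the standard route (also the one taken in the cited source): establish the distance formula for the lexicographic product, decompose the spheres of $G[H]$ as $S_1(x)=(S^G_1(g)\times V(H))\cup(\{g\}\times S^H_1(h))$, $S_2(x)=(S^G_2(g)\times V(H))\cup(\{g\}\times(V(H)\setminus(\{h\}\cup S^H_1(h))))$ and $S_i(x)=S^G_i(g)\times V(H)$ for $i\ge 3$, and then reduce both implications to the sphere-size criterion of Proposition~\ref{prop:SDB characterization}. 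All the individual steps check out: the cardinality formulas are right, the cancellation of the $\deg_H$ terms across projecting edges is exactly where regularity of $H$ enters, and in the converse direction the deduction of regularity of $H$ from regularity of $G[H]$ by comparing two vertices in the same fibre is valid because a connected SDB graph is regular globally, not just across edges. Your explicit exclusion of the degenerate case $|V(G)|=1$ is also the right call, since there $G[H]\cong H$ and the statement would reduce to the false claim that a connected regular graph is SDB; the cited theorem implicitly assumes $G$ is non-trivial.
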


For constructions of several infinite families of semisymmetric  distance-balanced graphs the following result will be useful:

\begin{lemma}\cite[Proposition 4.3]{KMMM05}\label{semi3}
	Let $\G$ be a semisymmetric graph. Then for every positive integer $n$, the lexicographic product $\G[nK_1]$ is semisymmetric, where $nK_1$ denotes the empty graph of $n$ vertices.  
\end{lemma}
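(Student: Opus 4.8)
The plan is to verify directly the three defining properties of a semisymmetric graph for $\G[nK_1]$: that it is connected and regular, that it is edge-transitive, and that it is not vertex-transitive. Connectedness is immediate from the remark preceding Lemma~\ref{semi1} that $G[H]$ is connected exactly when $G$ is, and regularity is a one-line count: if $\G$ is $k$-regular then each vertex $(g,h)$ of $\G[nK_1]$ has exactly $nk$ neighbours, namely the vertices $(g',h')$ with $g'\sim g$ in $\G$. For edge-transitivity I would use two kinds of automorphisms of $\G[nK_1]$: for $\alpha\in\Aut(\G)$ the lift $(g,h)\mapsto(\alpha(g),h)$, and, for each $g\in V(\G)$, the map permuting the fibre $\{g\}\times\mathbb{Z}_n$ in an arbitrary way and fixing every other vertex. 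The projections to $\G$ of two edges of $\G[nK_1]$ are edges of $\G$, so one first moves one projection onto the other by a lift of an edge-transitivity automorphism of $\G$, and then corrects the $\mathbb{Z}_n$-coordinates at the two endpoints by fibre permutations.

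The substantive part is showing that $\G[nK_1]$ is not vertex-transitive. Recall $\G$, being semisymmetric, is connected, bipartite, regular, and edge-transitive, with the two parts $X$ and $Y$ of its bipartition equal to the two orbits of $\Aut(\G)$. Then $\Lambda:=\G[nK_1]$ is connected and bipartite with parts $X\times\mathbb{Z}_n$ and $Y\times\mathbb{Z}_n$, and every automorphism of $\Lambda$ either fixes both parts or interchanges them; so it suffices to exclude an automorphism of $\Lambda$ that interchanges the parts. The key observation is that two vertices $(g,h),(g',h')$ of $\Lambda$ have equal neighbourhoods precisely when $N_\G(g)=N_\G(g')$; hence every automorphism of $\Lambda$, preserving equality of neighbourhoods, permutes the ``same-neighbourhood'' classes $[g]\times\mathbb{Z}_n$, where $[g]$ is the set of vertices of $\G$ whose neighbourhood equals $N_\G(g)$, and therefore induces an automorphism $\bar\beta$ of the quotient graph $\G^{+}$ whose vertices are the classes $[g]$. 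Adjacent classes of $\G^{+}$ span complete bipartite subgraphs of $\G$ and non-adjacent classes span no edges, so $\G$ is reconstructed from $\G^{+}$ together with the class sizes; and since $X$ is an $\Aut(\G)$-orbit, all classes contained in $X$ have one common size $t$, and likewise all classes contained in $Y$ have one common size $s$. Now if $\Lambda$ had an automorphism swapping its parts, the induced $\bar\beta$ would send the classes contained in $X$ bijectively onto the classes contained in $Y$ while preserving class sizes, forcing $t=s$; then $\G\cong\G^{+}[tK_1]$, and lifting $\bar\beta$ through this isomorphism would give an automorphism of $\G$ mapping a vertex of $X$ into $Y$, which is impossible since $X$ is an $\Aut(\G)$-orbit. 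So $\Lambda$ is not vertex-transitive, and being connected, regular, and edge-transitive as well, it is semisymmetric.

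The step I expect to be the main obstacle is exactly this last one --- forbidding an automorphism of $\G[nK_1]$ that exchanges the two colour classes --- and it is genuinely delicate only when $\G$ itself has ``twins'' (distinct vertices with equal neighbourhoods), since then the fibres $\{g\}\times\mathbb{Z}_n$ are strictly smaller than the same-neighbourhood classes and one really needs the quotient-graph bookkeeping above rather than a naive ``$\Aut(\G[nK_1])$ preserves the fibres'' argument. An alternative route for this step is to invoke the Sabidussi--Hemminger description of the automorphism group of a lexicographic product: a connected bipartite graph with at least three vertices has no two vertices with the same \emph{closed} neighbourhood, so $\Aut(\G[nK_1])$ is the imprimitive wreath product $\mathrm{Sym}(\mathbb{Z}_n)\wr\Aut(\G)$ with the fibres as blocks; its orbits on $V(\G[nK_1])$ are then exactly the sets $O\times\mathbb{Z}_n$ with $O$ an orbit of $\Aut(\G)$, and since $\Aut(\G)$ has two orbits, so does $\Aut(\G[nK_1])$.
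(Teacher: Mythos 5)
The paper does not prove this lemma at all: it is imported verbatim as \cite[Proposition 4.3]{KMMM05}, so there is no in-text argument to compare against. Judged on its own, your main line of proof is correct and complete. Connectedness, $nk$-regularity and edge-transitivity are handled exactly as one would expect (lifts of automorphisms of $\G$ plus independent fibre permutations, using that all vertices of a fibre have identical neighbourhoods in $\G[nK_1]$). For the non-vertex-transitivity you correctly reduce to excluding a part-swapping automorphism of the connected bipartite graph $\G[nK_1]$, and your quotient by the ``same open neighbourhood'' relation is the right tool: the classes of $\G[nK_1]$ are the sets $[g]\times\mathbb{Z}_n$, any automorphism permutes them and induces an automorphism of $\G^{+}$, the orbit structure of $\Aut(\G)$ forces a common class size $t$ on the $X$-side and $s$ on the $Y$-side, a part-swap forces $t=s$, and then $\G\cong\G^{+}[tK_1]$ lets you pull the induced map back to an automorphism of $\G$ crossing the bipartition, a contradiction. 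Each of these steps checks out, including the implicit facts that classes lie entirely in one part and that adjacent classes are completely joined while classes themselves are independent.

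One caveat: your proposed \emph{alternative} route via Sabidussi--Hemminger is not correct as stated. The hypothesis you verify (no two vertices of $\G$ with the same \emph{closed} neighbourhood) is the condition paired with connectedness of $\overline{nK_1}=K_n$, which holds automatically; the condition that actually threatens the equality $\Aut(\G[nK_1])=\mathrm{Sym}(\mathbb{Z}_n)\wr\Aut(\G)$ for the disconnected inner factor $nK_1$ is the absence of two vertices of $\G$ with the same \emph{open} neighbourhood. A semisymmetric $\G$ may well have such twins, and then the fibres are not blocks and the wreath product is a proper subgroup of $\Aut(\G[nK_1])$ --- precisely the situation you yourself flag as the delicate case. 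So the alternative would break down exactly where care is needed, and only your main quotient-graph argument should be retained.
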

 
 With these results in mind, we would like to point out the desired construction can be given provided we find at least one connected distance-balanced semisymmetric graph which is not strongly distance-balanced. Namely, let $\G$ be such a graph.  Then combining together Lemma \ref{semi1} and Lemma \ref{semi3}, we have that $\G[nK_1]$ is a distance-balanced semisymmetric graph for every positive integer $n$. Additionally, since $\G$ is a connected graph which is not SDB, it follows from Lemma \ref{semi2} that $\G[nK_1]$ is not SDB. For every positive integer $n$, we thus have the lexicographic product $\G[nK_1]$ is a DB semisymmetric graph which is not SDB. 
Kutnar  {et al.}\ checked the list of all semisymmetric connected cubic graphs of order up to 768 \cite{CMMP}, and there are exactly 11 distance-balanced graphs in this list, all of them are also strongly distance-balanced. They also checked the list of all connected semisymmetric tetravalent graphs of order up to 100 from the list of Poto\v cnik and Wilson, and there are 26 distance-balanced graphs in this list, all of which are also strongly distance-balanced. In the meantime, Poto\v cnik and Wilson extended their list of connected tetravalent edge-transitive graphs up to 512 vertices \cite{PW20}, and using this extended list we were able to find examples of semisymmetric graphs which are distance-balanced but not strongly distance-balanced.

\begin{example}
Graphs 
$C4[150, 9]$, 
$C4[240, 60]$,
$C4[240, 61]$,
$C4[240, 105]$,
$C4[240, 168]$, 
$C4[288, 145]$,
$C4[288, 171]$,
$C4[288, 246]$, 
$C4[312, 40]$,
$C4[336, 46]$,
$C4[336, 49]$,
$C4[336, 107]$,
$C4[336, 129]$,
$C4[336, 135]$,
$C4[336, 157]$,
$C4[336, 166]$,
$C4[360, 177]$,
$C4[384, 81]$,
$C4[384, 85]$,
$C4[384, 341]$,
$C4[384, 380]$,
$C4[384, 462]$,
$C4[384, 499]$,
$C4[400, 44]$,
$C4[432, 163]$,
$C4[432, 164]$,
$C4[432, 198]$,
$C4[432, 229]$,
$C4[432, 241]$,
$C4[432, 253]$,
$C4[432, 274]$,
$C4[432, 282]$,
$C4[480, 126]$,
$C4[480, 131]$,
$C4[480, 300]$,
$C4[480, 359]$,
$C4[480, 453]$,
$C4[480, 461]$,
$C4[480, 520]$,
$C4[480, 523]$,
$C4[486, 68]$,
$C4[486, 69]$,
$C4[486, 74]$,
$C4[504, 154]$,
$C4[504, 155]$
defined in \cite{PW20}
are connected semisymmetric graphs of valency 4 which are distance-balanced but not strongly distance-balanced. (The parameter $n$ in $C4[n,i]$ denotes the order of the corresponding graph). 
Using the distance-orbit chart given in \cite{PW20} (where the sizes of orbits of the stabilizer $\Aut(\G)_u$ of a vertex $u$ at {distances} $0,1, \ldots, d$ from $u$ are shown) one can easily check the distance-balanced and strongly distance-balanced properties of the graph under consideration (the orbit sizes are given for representatives of bipartition sets). For example, the distance-orbit chart of the graph $C4[150,9]$ is 

\begin{center}
  \begin{tabular}{ |c | c | c | c | c | c | c | c | c | c |}
    \hline
   Distance &  0	& 1& 	2&	3&	4&	5&	6&	7&	8 \\ \hline
   White vertex & 1	&4&	$2,4^2$ &	$2^2,4^4$ & $2,4^7$ &	$2^2,4^9$ &	$2^3,4^7$ &	$1,2,4^2$ &\\ \hline
  Black vertex &   1 &	4	&$2,4^2$&	$2^2,4^4$&	$2^2,4^7$&	$2^2,4^9$&	$2,4^7$&	$1,2,4^2$&	$2$ \\ \hline
  \end{tabular}
\end{center}
This means that there are 4 vertices at distance 1 from a white vertex, 10 vertices at distance two (one orbit of size 2 and two orbits of size 4), 20 vertices at distance 3 (two orbits of size 2 and 4 orbits of size 4), and so on.
By the result of Balakrishnan  {et al.}\  \cite{BCPSSS}, a graph is distance-balanced if and only if the sum of the distances from a given vertex to all other vertices is independent of the chosen vertex, which can easily be verified from the distance-orbit chart. Similarly, a graph is strongly distance-balanced if {and only if} the number of vertices at distance $i$ from a given vertex is independent of the chosen vertex, which can also easily be read from the distance-orbit chart.
\end{example}

 \begin{corollary}
 	There exist infinite families of distance-balanced semisymmetric graphs which are not strongly distance-balanced.
 	\end{corollary}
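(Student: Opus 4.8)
The plan is to obtain the corollary as a direct formal consequence of the Example together with Lemmas~\ref{semi1}, \ref{semi2} and \ref{semi3}, exactly along the recipe spelled out in the paragraph preceding the Example. First I would fix a base graph: let $\G$ be any one of the graphs listed in the Example, say $C4[150,9]$, which is a connected semisymmetric graph of valency $4$ that is distance-balanced but not strongly distance-balanced (these two facts being read off from its distance-orbit chart as explained there, using the criteria of Balakrishnan et al.\ for distance-balancedness and Proposition~\ref{prop:SDB characterization} for strong distance-balancedness).

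Next, for each positive integer $n$ I would form the lexicographic product $\G[nK_1]$, where $nK_1$ is the empty graph on $n$ vertices. Since $\G$ is connected, $\G[nK_1]$ is connected. Lemma~\ref{semi3} then gives that $\G[nK_1]$ is semisymmetric. Since $\G$ is distance-balanced and $nK_1$ is (vacuously, being $0$-regular) regular, Lemma~\ref{semi1} yields that $\G[nK_1]$ is distance-balanced; and since $\G$ is connected but not strongly distance-balanced, Lemma~\ref{semi2} yields that $\G[nK_1]$ is not strongly distance-balanced. Finally, to see that these graphs really constitute an infinite family, note that $|V(\G[nK_1])|=n\,|V(\G)|$, so distinct values of $n$ give graphs on distinct numbers of vertices, hence pairwise non-isomorphic ones; thus $\{\G[nK_1]\mid n\geq 1\}$ is an infinite family of distance-balanced semisymmetric graphs that are not strongly distance-balanced, which completes the proof.

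I do not expect a genuine obstacle in this deduction: the only substantive input is the existence of a single connected distance-balanced semisymmetric graph that is not strongly distance-balanced, and this is precisely what the Example supplies (found by searching the extended Poto\v cnik--Wilson census of tetravalent edge-transitive graphs and verifying the relevant distance-orbit charts). The one point I would be careful to state explicitly is that Lemma~\ref{semi1} requires the inner factor to be regular, which holds here because $nK_1$ is $0$-regular, so no degenerate case is excluded.
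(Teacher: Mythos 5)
Your proposal is correct and follows essentially the same route as the paper: the paper's argument is precisely the paragraph preceding the Example (take a connected DB semisymmetric non-SDB graph $\G$ from the census, form $\G[nK_1]$, and apply Lemmas~\ref{semi1}, \ref{semi2} and \ref{semi3}), and your only addition is the explicit remark that distinct $n$ give non-isomorphic graphs. The one cosmetic point worth noting is that Lemma~\ref{semi1} as stated requires $H$ to be connected while $nK_1$ is disconnected for $n\geq 2$; this mismatch is already present in the paper's own application, and the cited result of Jerebic, Klav\v zar and Rall does cover this case, so it is not a gap you introduced.
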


\section{Recognition of SDB and NDB graphs}\label{sec:recognition of SDB and NDB}

Let $\G$ be a graph with $n$ vertices and $m$ edges. In \cite{BCPSSS} it is proved that it can be verified in $O(mn)$ time if $\G$ is distance-balanced. We will now prove that the same result holds for strongly distance-balanced graphs { and nicely distance-balanced graphs.}

\begin{proposition}
	Let $\G$ be a connected graph with $n$ vertices and $m$ edges. It can be checked in $O(mn)$ time if $\G$ is strongly distance-balanced.
\end{proposition}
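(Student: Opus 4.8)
The plan is to reduce the test, via Proposition~\ref{prop:SDB characterization}, to a comparison of ``distance distribution vectors'' of vertices, and then to observe that each such vector is produced by a single breadth-first search. Concretely, for $u\in V(\G)$ put $\sigma(u)=(|S_0(u)|,|S_1(u)|,\ldots,|S_{n-1}(u)|)$, padding with zeros beyond the eccentricity of $u$. By Proposition~\ref{prop:SDB characterization}, $\G$ is strongly distance-balanced if and only if $|S_i(u)|=|S_i(v)|$ for every edge $uv$ and every $i\in\{0,\ldots,d\}$; since $\G$ is connected, this is in turn equivalent to the single statement that $\sigma(u)=\sigma(v)$ for all $u,v\in V(\G)$ (edgewise equality propagates along paths, and conversely it is trivially edgewise true once all the $\sigma(u)$ coincide).

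First I would compute $\sigma(u)$ for a fixed $u$ by running a breadth-first search from $u$: maintain an array indexed by distance and, whenever a vertex $x$ is first discovered, increment the entry at index $d(u,x)$. A BFS visits each vertex and traverses each edge a bounded number of times, so it runs in $O(n+m)$ time; because $\G$ is connected we have $m\ge n-1$, hence this is $O(m)$. Repeating over all $n$ choices of base vertex $u$ costs $O(mn)$ time (and $O(n^2)$, i.e.\ $O(mn)$, space to store all the vectors). It then remains to verify that the $n$ vectors $\sigma(u)$ are all equal: fix a reference vertex $u_0$ and compare $\sigma(u)$ with $\sigma(u_0)$ for each $u$, at a cost of $O(n)$ per comparison (the vectors have length at most $n$), for a total of $O(n^2)\le O(mn)$. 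Summing the three stages, the whole procedure runs in $O(mn)$ time. (Alternatively, one may skip the global reformulation and directly check $\sigma(u)=\sigma(v)$ for each of the $m$ edges in $O(n)$ time apiece, which again gives $O(mn)$, and which is exactly the condition of Lemma~\ref{lem:edges strongly distance-balanced} applied edge by edge.)

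The content here is essentially bookkeeping, so there is no single hard obstacle; the two points that need a word of justification are the equivalence in the first paragraph — in particular that edgewise equality of the $|S_i|$ forces \emph{global} equality of the distance distributions, which is where connectivity is used — and the running-time accounting for BFS, namely that $O(n+m)=O(m)$ for connected graphs so that the $n$ searches fit in $O(mn)$. Both are routine. I would present the argument in exactly this order: state the characterization via Proposition~\ref{prop:SDB characterization}, describe the BFS that yields $\sigma(u)$ and bound it by $O(m)$, then note the $n$ searches and the final comparison give the claimed $O(mn)$ bound.
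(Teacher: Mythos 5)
Your proposal is correct and follows essentially the same route as the paper: reduce to the characterization of Proposition~\ref{prop:SDB characterization}, observe (using connectivity) that the edgewise condition is equivalent to all distance distribution vectors coinciding, and compute each vector by a BFS in $O(m)$ time, for $O(mn)$ overall. The paper's proof is terser but identical in substance; your extra remarks on propagating edgewise equality along paths and on the final comparison cost only make explicit what the paper leaves implicit.
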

\begin{proof}
	By Proposition~\ref{prop:SDB characterization} it follows that $\G$ is strongly distance-balanced if and only if $|S_i(u)|$ does not depend on the choice of vertex $u$, for any $i\in \{1,\ldots,d\}$ where $d$ is the diameter of $\G$. Using BFS algorithm, the sizes of sets $|S_i(u)|$ can be determined in $O(m)$ time, for any fixed vertex $u$. Calculating these numbers for every vertex of $\G$ can then be done in $O(mn)$ time.
\end{proof}

{
\begin{proposition}
	Let $\G$ be a connected graph with $m$ edges. It can be checked in $O(mn)$ time if $\G$ is nicely distance-balanced.
\end{proposition}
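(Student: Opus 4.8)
The plan is to reduce the recognition of the NDB property to computing, for every ordered pair of adjacent vertices $(u,v)$, the single quantity $|W_{u,v}|$, and then checking that all of these $2m$ numbers coincide. By definition, $\G$ is NDB precisely when there is a positive integer $\gamma$ with $|W_{u,v}|=\gamma$ for every ordered adjacent pair $(u,v)$; since $u\in W_{u,v}$ always, such a common value is automatically a positive integer, so this condition is equivalent to saying that all $2m$ values $|W_{u,v}|$ are equal. (Note this single condition simultaneously encodes being balanced for each edge and the value being constant over edges.) Hence it suffices to compute these $2m$ values within the claimed time bound.

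First I would observe that the naive approach --- for each edge $uv$, run a BFS from $u$ and from $v$ and compare the resulting distance arrays --- costs $\Theta(m)$ per edge, hence $\Theta(m^2)$ overall, which is too slow. Instead, I would organize the computation by iterating over \emph{source} vertices, so that a single BFS updates the counters of all edges at once. Concretely: initialize a counter $c(u,v)=0$ for every ordered adjacent pair; for each vertex $x\in V(\G)$, run a BFS from $x$ (this takes $O(m)$ time since $\G$ is connected) to obtain $d(x,w)$ for all $w$, and then scan the $m$ edges, incrementing $c(u,v)$ when $d(x,u)<d(x,v)$ and incrementing $c(v,u)$ when $d(x,v)<d(x,u)$. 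After all $n$ source vertices have been processed we have $c(u,v)=|\{x\in V(\G)\mid d(x,u)<d(x,v)\}|=|W_{u,v}|$ for every ordered adjacent pair.

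For the complexity: each source vertex costs $O(m)$ for its BFS plus $O(m)$ for the edge scan, so the whole loop is $O(mn)$; the extra storage is $O(m)$ for the counters and $O(n)$ for the current BFS. Finally, in $O(m)$ additional time one checks whether all $2m$ recorded values $c(u,v)$ are equal: if they are, $\G$ is NDB with $\gamma$ equal to this common value, and otherwise $\G$ is not NDB. This gives the total running time of $O(mn)$. (The trivial boundary cases, such as $\G=K_1$ or a graph consisting of a single edge, are handled directly and cause no difficulty.)

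There is no real obstacle in this argument; the only point requiring care is the one highlighted above, namely to loop over the candidate elements $x$ of the sets $W_{u,v}$ (equivalently, over the source vertices of the BFS) rather than over the edges, so as to avoid the quadratic-in-$m$ cost of recomputing distance arrays for each edge separately.
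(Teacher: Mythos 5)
Your proposal is correct and follows essentially the same route as the paper: run a BFS from every vertex to obtain all distances in $O(mn)$ time, then for each edge $uv$ count the vertices $x$ with $d(x,u)<d(x,v)$ versus $d(x,v)<d(x,u)$, and finally check that all $2m$ resulting values coincide. The only (immaterial) difference is that you interleave the per-edge counting with the BFS loop to avoid storing the full distance matrix, whereas the paper stores the matrix first and then iterates over the edges.
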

\begin{proof}
	Using the BFS algorithm, computing the distance from each vertex to all other vertices can be done in $O(mn)$ time, and this information can be stored, for example in a distance matrix. 
For a fixed edge $uv$, iterating over each vertex $w$ and checking whether $d(u,w)$ is smaller, larger or equal than $d(v,w)$, we can
compute the sizes of $W_{u,v}$ and $W_{v,u}$, which can be done in $O(n)$ time (for a single edge). Calculating the values of $W_{u,v}$ and $W_{v,u}$ can then be done in $O(mn)$ time.
\end{proof}}

\bigskip 

{For a graph $\G$ and a vertex $v$, one can construct the sets $S_i(v)$ of all vertices in $\G$ which are at distance $i$ from $v$. By Proposition \ref{prop:SDB characterization}, we observe that $\G$ is SDB if and only if the sizes of the sets $S_i(v)$ {do not} depend on the choice of $v$. In \cite{BCPSSS}, Balakrishnan {et al.}\, showed that a graph is distance-balanced if and only if the sum of the distances from a given vertex to all other vertices is independent of the chosen vertex. Namely, $\G$ is DB if and only if $\sum_{i} i|S_i(v)|$ is constant. Therefore, we conclude the paper with the following question.}
\begin{problem}
	Does there exist a characterization of NDB graphs in terms of sets {the} $S_i(v)$? 
\end{problem}

\section{Acknowledgements}
We would like to thank to anonymous referees for carefully reading the manuscript and helpful comments that improved the quality of the paper.
This work is supported in part by the Slovenian Research Agency (research programs P1-0285, P1-0404, research projects N1-0140, N1-0159, N1-0208, N1-0102 ,  J1-1691, J1-1694, J1-1695,  J1-2451,  and Young Researchers Grant).

\end{document}